\documentclass [a4paper, 12pt, reqno]{amsart}
\usepackage [latin1]{inputenc}
\usepackage {a4}
\usepackage{amscd}
\usepackage{epsfig}
\usepackage{amssymb}
\usepackage{amsmath}
\usepackage{amsthm}
\usepackage[T1]{fontenc}
\usepackage{ae,aecompl}
\usepackage[arrow, matrix, curve]{xy}

\usepackage{geometry}
\geometry{head=8mm,foot=8mm,bindingoffset=12mm,vcentering=true,twoside=true,textheight=242mm,textwidth=151mm}



\newcommand{\C} {\ensuremath{\mathbb{C}}}

\newcommand{\OO}{\mathcal{O}}

\newcommand{\dq}{\overline{\partial}}

\DeclareMathOperator{\Reg}{Reg}
\DeclareMathOperator{\Sing}{Sing}

\DeclareMathOperator{\Dom}{Dom}

\newtheorem {satz} {Satz} [section]
\newtheorem {lem} [satz] {Lemma}
\newtheorem {cor} [satz] {Corollary}
\newtheorem {defn} [satz] {Definition}
\newtheorem {prop} [satz] {Proposition}

\newtheorem {thm} [satz] {Theorem}

\theoremstyle{remark}

\newtheorem {example} [satz] {Example}
\newtheorem {rem} [satz] {Remark}

\numberwithin{equation}{section}

\renewcommand{\theta}{\vartheta}



\title[Adjunction and extension of $L^2$-cohomology classes] 
{Adjunction for the Grauert--Riemenschneider canonical sheaf and
  extension of \\ $L^2$-cohomology classes}


\thanks{
The first author is supported by the Deutsche Forschungsgemeinschaft (DFG, German Research Foundation), 
grant RU 1474/2 within DFG's Emmy Noether Programme.
The second and third authors are supported by the Swedish Research Council.}

\author{Jean Ruppenthal}
\address{Department of Mathematics, University of Wuppertal, Gau{\ss}str. 20, 42119 Wuppertal, Germany.}
\email{ruppenthal@uni-wuppertal.de}

\author{H{\aa}kan Samuelsson Kalm and Elizabeth Wulcan}
\address{Department of Mathematics, Chalmers University of Technology and the University of G\"oteborg,
S-412 96 G\"oteborg, Sweden.}
\email{hasam@chalmers.se, wulcan@chalmers.se}


\date{\today}


\begin{document}

\begin{abstract}
In the present paper, we derive an adjunction formula for the Grauert-Riemenschneider canonical sheaf
of a singular hypersurface $V$ in a complex manifold $M$.
This adjunction formula is used to study the problem of extending $L^2$-cohomology classes 
of $\dq$-closed forms from the singular hypersurface $V$ to the manifold $M$
in the spirit of the Ohsawa-Takegoshi-Manivel extension theorem.
We do that by showing that our formulation of the $L^2$-extension problem is invariant under
bimeromorphic modifications, so that we can reduce the problem to the smooth case
by use of an embedded resolution of $V$ in $M$.
\end{abstract}

\maketitle


\section{Introduction}

The canonical sheaf of holomorphic forms of top degree is a fundamental object associated
to a complex manifold. It is meaningful not only in the context of Serre duality and various vanishing theorems
of Kodaira type, but plays also an essential role in the classification of complex spaces as it appears
e.g. in the definition of the Kodaira dimension. 

On singular complex varieties, there are different reasonable notions of canonical sheaves
generalizing the canonical sheaf of smooth spaces. On the one hand, there is the dualizing sheaf of Grothendieck
which behaves well with respect to Serre duality and is used for the classification of normal spaces.
On the other hand, there is the canonical sheaf of
Grauert-Riemenschneider which satisfies Kodaira vanishing and behaves very well under bimeromorphic
modifications.

One crucial tool in the study of canonical sheaves of smooth varieties is 
the adjunction formula which gives a link between the canonical sheaf of the ambient space and the canonical sheaf
of a subspace. The adjunction formula sits at the core of e.g. the Ohsawa-Takegoshi-Manivel $L^2$-extension 
theorem, a celebrated analytical result with numerous applications, particularly in algebraic geometry.

However, when it comes to singular spaces, there is a well-known adjunction formula
for the Grothendieck dualizing sheaf, 
but to our knowledge no general adjunction formula is known for the Grauert-Riemenschneider canonical sheaf. 

The main results of the present paper are as follows: we derive an adjunction formula 
for the Grauert-Riemenschneider canonical sheaf
of a singular hypersurface $V$ in a complex manifold $M$,
and we show that our adjunction formula is the 'right' one to be used in the context
of $L^2$-extension problems of Ohsawa-Takegoshi-Manivel type.
In fact, we prove that our formulation of $L^2$-extension problems
is invariant under bimeromorphic modifications.

\medskip
Let us explain this  more precisely.
We consider the following situation: 
let $M$ be a compact complex manifold of dimension $n$ and $V$ a hypersurface
in $M$.
When $V$ is smooth, the well-known adjunction formula reads as
\begin{eqnarray}\label{eq:ad1}
K_V =  \left.\big(K_M \otimes [V] \big)\right|_V = K_M|_V \otimes N_V,
\end{eqnarray}
where $K_V$ and $K_M$ denote the canonical bundles of $V$ and $M$, respectively, $[V]$ is the line bundle associated to $\OO(V)$,
and $N_V=[V]|_V$ is the normal bundle of $V$ in $M$.

In the language of sheaves, \eqref{eq:ad1} can be formulated as the short exact sequence
\begin{eqnarray}\label{eq:ad2}
0 \rightarrow \mathcal{K}_M \hookrightarrow \mathcal{K}_M \otimes \OO(V) \overset{\Psi}{\longrightarrow} \iota_* \mathcal{K}_V \rightarrow 0,
\end{eqnarray}
where $\mathcal{K}_V$ and $\mathcal{K}_M$ are the canonical sheaves of $V$ and $M$, respectively,
and $\iota_* \mathcal{K}_V$ is the trivial extension of $\mathcal{K}_V$ to $M$.
The mapping $\Psi$ is defined by the local equation
\begin{eqnarray}\label{eq:ad3}
\eta = \frac{df}{f} \wedge \Psi(\eta)
\end{eqnarray}
in $\mathcal{K}_M \otimes \OO(V)$ over $V$ where $f$ is any local defining function for the divisor $V$.
More concretely, if $z$ are local holomorphic coordinates on $M$ such that $V=\{z_1=0\}$, then 
$\Psi(g dz_1\wedge \cdots \wedge dz_n/z_1)=g|_V dz_2\wedge \cdots \wedge dz_n$.

When $V$ is a singular hypersurface, the adjunction formula \eqref{eq:ad2} remains valid
if we replace $\mathcal{K}_V$ by Grothendieck's dualizing sheaf $\omega_V = \mathcal{E}\mathit{xt}^{1}_{\OO_M}(\OO_V,\mathcal{K}_M)$,
sometimes also called the Barlet sheaf (see e.g. \cite{PR}, \S 5.3).
However, this adjunction is not compatible with the classical Ohsawa-Takegoshi-Manivel extension theorem
as sections of $\omega_V$ are in general not square-integrable $(n-1)$-forms on the regular part of $V$,
but may have poles of higher order at the singular set $\Sing V$ of $V$.
In contrast, all the classical extension theorems (see \cite{OT}, \cite{Ma}, \cite{De3})
are about extension of holomorphic $L^2$-forms from a possibly singular divisor.

Thus, we propose to replace the canonical sheaf of $V$ in the singular case by
the Grauert-Riemenschneider canonical sheaf of holomorphic square-integrable $(n-1)$-forms
on (the regular part of) $V$. We denote that sheaf again by $\mathcal{K}_V$.

Our first main result is the following adjunction formula for the Grauert-Riemen\-schneider canonical sheaf, cf.\
Section~\ref{ssec:adjunction2}.

\begin{thm}\label{thm:first}
Let $V$ be a (possibly singular) hypersurface in a Hermitian complex manifold $M$.
Then there exists a unique multiplier ideal sheaf $\mathcal{J}(V)$ such that there is
a natural short exact sequence
\begin{eqnarray}\label{eq:ad4}
0 \rightarrow \mathcal{K}_M \hookrightarrow \mathcal{K}_M \otimes \OO(V) \otimes \mathcal{J}(V) \overset{\Psi}{\longrightarrow}
\iota_* \mathcal{K}_V \rightarrow 0,
\end{eqnarray}
where $\mathcal{K}_M$ is the usual canonical sheaf of $M$, $\iota_* \mathcal{K}_V$ is the trivial extension of the Grauert-Riemenschneider
canonical sheaf $\mathcal{K}_V$ of $V$ to $M$, and $\Psi$ is the adjunction map defined by \eqref{eq:ad3} on $V^*=\Reg V$.
The zero set of $\mathcal{J}(V)$ is contained in $\Sing V$.
\end{thm}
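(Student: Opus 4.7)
I would first give an intrinsic definition of $\mathcal{J}(V)$ and then verify the sequence. Near a point $p\in V$, choose a reduced local defining function $f$ for $V$ and declare that a germ $g\in\OO_{M,p}$ lies in $\mathcal{J}(V)_p$ precisely when the Poincar\'e residue
\[
\Psi\!\left(\frac{g\,dz_1\wedge\cdots\wedge dz_n}{f}\right)
\]
is locally $L^2$ on $V^*$ near $p$ with respect to the ambient Hermitian metric. A change of defining function or of coordinates introduces only a unit in $\OO_{M,p}$, which cannot affect local square-integrability, so this is well-defined; it is stable under multiplication by $\OO_M$, hence is an ideal sheaf; and on $V^*$ the classical smooth adjunction gives $\mathcal{J}(V)|_{V^*}=\OO_M|_{V^*}$, so the zero locus of $\mathcal{J}(V)$ sits in $\Sing V$. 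Uniqueness is then forced, since any $\mathcal{J}(V)$ fitting into the sequence must be the maximal ideal of $\OO_M$ for which $\Psi$ carries $\mathcal{K}_M\otimes\OO(V)\otimes\mathcal{J}(V)$ into $\iota_*\mathcal{K}_V$.

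\textbf{Exactness at the left and middle.} The left inclusion is tensoring with the canonical section of $\OO(V)$; its image has identically vanishing Poincar\'e residue and hence trivially lies in $\mathcal{K}_M\otimes\OO(V)\otimes\mathcal{J}(V)$. That $\Psi$ sends the middle into $\iota_*\mathcal{K}_V$ is built into the definition of $\mathcal{J}(V)$ once one uses the characterization of the Grauert--Riemenschneider sheaf as locally $L^2$ holomorphic top forms on the regular part. A kernel germ $g\cdot dz/f$ has $g|_V=0$, so $g=f h$ by the Nullstellensatz, which identifies $\ker\Psi$ with $\mathcal{K}_M$. The only remaining nontrivial point is surjectivity of $\Psi$ onto $\iota_*\mathcal{K}_V$.

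\textbf{Surjectivity via resolution (the main obstacle).} I would choose an embedded log-resolution $\pi:\widetilde M\to M$ with strict transform $\widetilde V$ smooth and $\pi^{-1}(V)=\widetilde V+\sum_j a_j E_j$ simple normal crossing. On $\widetilde M$ the classical smooth adjunction yields
\[
0\to\mathcal{K}_{\widetilde M}\to\mathcal{K}_{\widetilde M}\otimes\OO(\widetilde V)\xrightarrow{\widetilde\Psi}\iota_*\mathcal{K}_{\widetilde V}\to 0,
\]
and Grauert--Riemenschneider vanishing $R^1\pi_*\mathcal{K}_{\widetilde M}=0$ guarantees that $\pi_*$ preserves exactness. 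The identifications $\pi_*\mathcal{K}_{\widetilde M}=\mathcal{K}_M$ (smoothness of $M$ plus the projection formula with the effective relative canonical divisor) and $\pi_*\mathcal{K}_{\widetilde V}=\mathcal{K}_V$ (the very definition of the Grauert--Riemenschneider sheaf), combined with the factorization $\mathcal{K}_{\widetilde M}\otimes\OO(\widetilde V)=\pi^*(\mathcal{K}_M\otimes\OO(V))\otimes\OO(K_{\widetilde M/M}-\sum_j a_j E_j)$, rewrite the middle pushforward as $\mathcal{K}_M\otimes\OO(V)\otimes\pi_*\OO_{\widetilde M}(K_{\widetilde M/M}-\sum_j a_j E_j)$. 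This at once delivers the required surjectivity on $M$ and exhibits $\mathcal{J}(V)$ as a coherent multiplier ideal. The essential remaining work, which I expect to be the technically hard step, is to reconcile this resolution-theoretic description with the intrinsic $L^2$-definition by a local pullback computation along $\pi$, verifying that the Jacobian factor of $\pi$ balances the multiplicities $a_j$ so that local $L^2$-integrability of the residue on $V^*$ corresponds exactly to membership in $\OO_{\widetilde M}(K_{\widetilde M/M}-\sum_j a_j E_j)$; this step also renders the naturality of \eqref{eq:ad4} and its independence of the chosen resolution manifest.
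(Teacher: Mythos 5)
Your proposal follows essentially the same route as the paper: push the smooth adjunction sequence for the strict transform forward along an embedded resolution, use Takegoshi/Grauert--Riemenschneider vanishing $R^1\pi_*\mathcal{K}_{M'}=0$ to preserve exactness, and identify the resulting three direct images ($\pi_*\mathcal{K}_{M'}\cong\mathcal{K}_M$, $\pi_*\iota_*\mathcal{K}_{V'}\cong\iota_*\mathcal{K}_V$, and the middle term via the projection formula). The single step you defer --- reconciling $\pi_*\OO(K_{M'/M}-\sum_j a_jE_j)$ with an $L^2$/multiplier-ideal description --- is precisely what the paper supplies by choosing generators $g_1,\dots,g_k$ of $\pi_*\OO(-E_f)$, setting $\varphi=\log(|g_1|+\cdots+|g_k|)$, observing that $\mathcal{J}(\pi^*\varphi)=\OO(-E_f)$ since $E_f$ has normal crossings, and invoking Demailly's change-of-variables formula $\pi^*\colon\mathcal{K}_M\otimes\mathcal{J}(\varphi)\cong\pi_*(\mathcal{K}_{M'}\otimes\mathcal{J}(\pi^*\varphi))$.
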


We will call the sheaf $\OO(V)\otimes \mathcal{J}(V)$, that now plays
the role of $\OO(V)$, the {\bf
  adjunction sheaf} of $V$ in $M$. In general the adjunction sheaf is
not locally free. 


As a consequence of Theorem \ref{thm:first}, we can deduce:

\begin{thm}\label{thm:gro1}
The natural inclusion $\mathcal{K}_M\otimes\OO(V)\otimes\mathcal{J}(V) \hookrightarrow \mathcal{K}_M\otimes\OO(V)$
induces a natural isomorphism
$$\mathcal{K}_V \cong \omega_V\otimes \mathcal{J}(V)|_V.$$
In particular,
there is a natural inclusion of the Grauert-Riemenschneider canonical sheaf into Grothendieck's dualizing sheaf,
$\mathcal{K}_V \subset \omega_V$.
\end{thm}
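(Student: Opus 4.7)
The plan is to deduce the statement from Theorem~\ref{thm:first} by comparison with the classical adjunction formula for the Grothendieck dualizing sheaf, followed by a snake lemma argument.

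Recall, as mentioned in the discussion preceding Theorem~\ref{thm:first}, that the classical adjunction sequence \eqref{eq:ad2} extends to the singular case with $\omega_V$ in place of $\mathcal{K}_V$:
\begin{equation*}
0 \to \mathcal{K}_M \to \mathcal{K}_M \otimes \OO(V) \xrightarrow{\Psi} \iota_*\omega_V \to 0,
\end{equation*}
with the very same Poincar\'e residue $\Psi$ of \eqref{eq:ad3} (see e.g.\ \cite{PR}). The inclusion $\mathcal{J}(V)\hookrightarrow\OO_M$, tensored with the locally free sheaf $\mathcal{K}_M\otimes\OO(V)$, gives an injection $\beta$ that fits, together with the sequence \eqref{eq:ad4} of Theorem~\ref{thm:first}, into a commutative diagram with exact rows:
\begin{equation*}
\begin{CD}
0 @>>> \mathcal{K}_M @>>> \mathcal{K}_M\otimes\OO(V)\otimes\mathcal{J}(V) @>\Psi>> \iota_*\mathcal{K}_V @>>> 0\\
@. @| @VV\beta V @VV\gamma V\\
0 @>>> \mathcal{K}_M @>>> \mathcal{K}_M\otimes\OO(V) @>\Psi>> \iota_*\omega_V @>>> 0,
\end{CD}
\end{equation*}
where $\gamma$ is the uniquely induced map on cokernels. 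The essential input here is that the top $\Psi$ of Theorem~\ref{thm:first} is literally the restriction of the bottom $\Psi$ along $\beta$, which is immediate from the definition \eqref{eq:ad3}.

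Applying the snake lemma, and using that the leftmost vertical is the identity together with the injectivity of $\beta$, we obtain $\ker\gamma\cong\ker\beta=0$. This already proves the natural inclusion $\mathcal{K}_V\hookrightarrow\omega_V$ claimed at the end of the theorem. To upgrade to the isomorphism $\mathcal{K}_V\cong\omega_V\otimes\mathcal{J}(V)|_V$, I would invoke Theorem~\ref{thm:first} to write $\iota_*\mathcal{K}_V=\Psi(\mathcal{K}_M\otimes\OO(V)\otimes\mathcal{J}(V))$, and then use the $\OO_M$-linearity and surjectivity of the bottom $\Psi$ onto $\iota_*\omega_V$ to identify this image with $\mathcal{J}(V)|_V\cdot\iota_*\omega_V$ as a subsheaf of $\iota_*\omega_V$.

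The final step is to identify $\mathcal{J}(V)|_V\cdot\omega_V$ with $\omega_V\otimes_{\OO_V}\mathcal{J}(V)|_V$. Since $V$ is a hypersurface in a smooth manifold, it is a local complete intersection and hence Gorenstein, so $\omega_V$ is an invertible $\OO_V$-module; tensoring by $\omega_V$ therefore preserves short exact sequences, so the multiplication map $\omega_V\otimes_{\OO_V}\mathcal{J}(V)|_V\to\omega_V$ realizes $\omega_V\otimes\mathcal{J}(V)|_V$ as precisely the subsheaf $\mathcal{J}(V)|_V\cdot\omega_V$ of $\omega_V$, giving the desired isomorphism. I expect the main technical obstacle to be a careful justification of the commutativity of the big diagram --- specifically, the compatibility of the left-hand inclusions $\mathcal{K}_M\hookrightarrow\mathcal{K}_M\otimes\OO(V)\otimes\mathcal{J}(V)$ and $\mathcal{K}_M\hookrightarrow\mathcal{K}_M\otimes\OO(V)$ under $\beta$ (which tacitly uses that the local equation $f$ of $V$ lies in $\mathcal{J}(V)$, a fact built into the construction of $\mathcal{J}(V)$ in Theorem~\ref{thm:first}) --- together with the verification that the identifications of subsheaves of $\omega_V$ made above respect the naturality of the maps involved. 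These compatibilities reduce, via the invertibility of $\omega_V$ and the explicit residue formula \eqref{eq:ad3}, to routine local computations on $V^*=\Reg V$ extended by naturality across $\Sing V$.
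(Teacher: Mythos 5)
Your proposal is correct and follows essentially the same route as the paper: the authors also juxtapose the exact sequence of Theorem~\ref{thm:first} with the classical adjunction sequence for $\omega_V$ in the commutative diagram \eqref{eq:diagram:gro} (with the natural inclusion $j$ in the middle and the identity on $\mathcal{K}_M$), and read off $\iota_*\mathcal{K}_V\cong\mathcal{J}(V)\otimes\iota_*\omega_V$ from the induced map on cokernels. Your snake-lemma step and the identification of the image with $\mathcal{J}(V)|_V\cdot\omega_V$ via invertibility of $\omega_V$ (V being Gorenstein as a hypersurface) simply make explicit what the paper compresses into its chain of isomorphisms.
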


If $V$ is normal, the inclusion $\mathcal{K}_V \subset \omega_V$ is
due to Grauert-Riemen\-schneider, see \cite{GrRie}. 
Also, when $V$ is normal we have the following characterization of
when this inclusion is an equality. 


\begin{thm}\label{thm:gro2}
Let $V$ be a normal hypersuface in $M$. Then $\mathcal{K}_V =
\omega_V$, i.e. $\mathcal{J}(V)=\OO_M$, exactly if $V$ has at worst 
canonical singularities.
\end{thm}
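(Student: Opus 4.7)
The plan is to combine Theorem \ref{thm:gro1} with the classical birational characterization of canonical singularities via a resolution. By Theorem \ref{thm:gro1}, $\mathcal{K}_V \cong \omega_V \otimes \mathcal{J}(V)|_V$, and by Theorem \ref{thm:first}, the zero set of $\mathcal{J}(V)$ is contained in $\Sing V \subset V$. Hence the three conditions $\mathcal{K}_V=\omega_V$, $\mathcal{J}(V)|_V=\OO_V$, and $\mathcal{J}(V)=\OO_M$ are all equivalent, and it suffices to show that $\mathcal{K}_V=\omega_V$ holds exactly when $V$ has at worst canonical singularities.

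Next I would fix a resolution $\pi\colon \wt V \to V$ of singularities and pass to the identification $\mathcal{K}_V \cong \pi_*\omega_{\wt V}$, which is the Grauert-Riemenschneider description of the Grauert-Riemenschneider canonical sheaf in the normal case (see \cite{GrRie}). Since $V$ is a hypersurface it is Gorenstein, so $\omega_V$ is invertible with an associated Cartier canonical divisor $K_V$; since $V$ is normal, $\pi$ is an isomorphism over $V^*=\Reg V$ and $\pi_*\OO_{\wt V}=\OO_V$. The identification $\mathcal{K}_V \cong \pi_*\omega_{\wt V}$ rests on two points: a holomorphic top form on $\wt V$ pushes down to a square-integrable holomorphic $(n-1)$-form on $V^*$, and conversely an $L^2$-holomorphic form on $V^*$ pulls back to a form on $\pi^{-1}(V^*)$ that extends holomorphically across the exceptional set by an $L^2$-Riemann removable singularity argument.

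With this identification in hand, the equivalence becomes essentially formal. By definition $V$ has canonical singularities precisely when the discrepancy $K_{\wt V}-\pi^* K_V$ is an effective exceptional divisor; this is equivalent to saying that the natural inclusion $\pi^*\omega_V \hookrightarrow \omega_{\wt V}$ is an equality off the exceptional set with cokernel supported on exceptional divisors. Applying $\pi_*$ and using $\pi_*\OO_{\wt V}=\OO_V$ together with the projection formula, canonical singularities translates exactly into $\omega_V = \pi_*\omega_{\wt V} = \mathcal{K}_V$. Conversely, if $\omega_V=\mathcal{K}_V=\pi_*\omega_{\wt V}$, then every local section of $\omega_V$ pulls back to a holomorphic form on $\wt V$, which forces all discrepancies to be non-negative.

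The main obstacle I foresee is the careful analytic identification $\mathcal{K}_V=\pi_*\omega_{\wt V}$ in the $L^2$-sense on the possibly singular normal hypersurface $V$: one needs both a local $L^2$-estimate for the holomorphic push-forward near $\Sing V$ and a clean removable singularity statement for $L^2$-holomorphic forms across exceptional divisors on $\wt V$. Once this analytic input is in place, the reduction to the algebraic definition of canonical singularities, and the derivation of $\mathcal{J}(V)=\OO_M$ via Theorem \ref{thm:gro1}, are routine.
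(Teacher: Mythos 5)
Your argument is correct and follows essentially the same route as the paper: reduce via Theorem \ref{thm:gro1} to the equivalence $\mathcal{K}_V=\omega_V\Leftrightarrow$ canonical singularities, use that $\omega_V$ is invertible since a normal hypersurface is Gorenstein, and translate canonical singularities into $\pi^*\omega_V\subset\mathcal{K}_{\wt V}$, pushing forward for one direction and pulling back for the other. The ``main obstacle'' you flag is not one here, since the paper \emph{defines} $\mathcal{K}_V$ as $\pi_*\mathcal{K}_{\wt V}$ in \eqref{babel} (with the $L^2$-description as the alternative characterization), so the identification you worry about is available by definition.
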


\medskip


Taking the long exact sequences of \eqref{eq:ad4} yields 
extension of cohomology classes of $\mathcal K_V$, cf. Remark
~\ref{tuggummi}. 
To obtain our $L^2$-extension results we need to be able to talk
about metrics ``on'' the adjunction sheaf. 
We believe that the following concept is a natural generalization of
smooth metrics on the line bundle $[V]$ associated with $\OO(V)$. 
%
%
%
%
%
%
%
%
%
%
%
If $D$ is an effective divisor in a complex manifold, there is an associated canonical singular
metric $e^{-2\varphi_D}$ on $[D]$; if $D$ is locally defined by
$f_\alpha$, then $\varphi_D$ is locally given as $\log |f_\alpha|$.

\begin{defn}\label{defn:smooth0}
Let $e^{-2\psi}$ be a singular metric on the normal bundle $[V]$ of the hypersurface $V$ in $M$.
We say that $e^{-2\psi}$ is a {\bf smooth metric} on the adjunction sheaf $\OO(V)\otimes\mathcal{J}(V)$ if there exists
an embedded resolution of singularities with only normal crossings
$\pi: (V',M') \rightarrow (V,M)$
such that $e^{-2\pi^*\psi+2\varphi_\Delta}$ is a smooth metric on the normal bundle $[V']$ of $V'$ in $M'$,
where $\Delta=\pi^* V - V'$ is the difference of the total transform $\pi^* V$ and the strict transform $V'$ of $V$.
\end{defn}


In fact, if $\pi: (V',M') \rightarrow (V,M)$ is any embedded resolution
of $V$ in $M$, any smooth metric on $[V']$ induces a smooth metric on
$\OO(V)\otimes\mathcal{J}(V)$, see Theorem ~\ref{thm:smooth}.


For any Hermitian complex space $X$ and a Hermitian line bundle
$F\rightarrow X$ (with a possibly singular metric),
we denote by $\mathcal{C}^{p,q}_X(F)$ the sheaf of germs of
$(p,q)$-forms $g$ on the regular part of $X$ with values in $F$, 
which are square-integrable
up to the singular set and which are in the domain of the $\dq$-operator in the sense of distributions, $\dq_w$,
meaning that $\dq_w g$ is again square-integrable up to the singular
set, see Section ~\ref{sec:l2theory}.
For an open set $U\subset X$, we define the $L^2$-cohomology
$$H^{p,q}_{(2)}(U^*,F) := H^q\big(\Gamma(U,(\mathcal{C}^{p,*}_X(F),\dq_w))\big).$$
Here and in the following,
let $U^*=\Reg U$ denote the regular part of $U$.

We can now give an alternative definition of
$\OO(V)\otimes\mathcal{J}(V)$ as the sheaf of holomorphic sections that are square integrable with
respect to a smooth metric on $\OO(V)\otimes\mathcal{J}(V)$. 



\begin{thm}\label{thm:resolution0}
Let $M$ be a compact complex manifold of dimension $n$ and let $V$ be a
hypersurface in $M$. Moreover let $e^{-2\psi}$ be a metric on $[V]$, which
is smooth on the adjunction sheaf $\OO(V)\otimes\mathcal{J}(V)$.
Then
\begin{equation}\label{waffel}
\OO(V)\otimes \mathcal{J}(V) \cong \ker \dq_w \subset
\mathcal{C}^{0,0}_M([V]^{sing}),
\end{equation} 
where $[V]^{sing}:=([V],e^{-2\psi})$. 

If $F\rightarrow M$ is any Hermitian line bundle (with smooth metric),
then the complex
$\big(\mathcal{C}^{n,*}_M(F\otimes [V]^{sing}),\dq_w\big)$
is a fine resolution for $\mathcal{K}_M(F)\otimes \OO(V)\otimes \mathcal{J}(V)$,
where $\mathcal{K}_M(F)$ denotes the sheaf of holomorphic
$n$-forms with values in $F$. 
\end{thm}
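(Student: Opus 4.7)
Both parts of the theorem reduce to the classical smooth case by pulling back along the embedded resolution $\pi:(V',M')\to(V,M)$ furnished by Definition~\ref{defn:smooth0}. Write $\pi^*V=V'+\Delta$, so that $\pi^*[V]=[V']\otimes[\Delta]$, and factorize the pulled-back metric as $e^{-2\pi^*\psi}=h\cdot e^{-2\varphi_\Delta}$, where $h=e^{-2\pi^*\psi+2\varphi_\Delta}$ is a smooth Hermitian metric on $[V']$ and $e^{-2\varphi_\Delta}$ is the canonical singular metric on $[\Delta]$. Since $\pi$ is proper and biholomorphic off a set of codimension $\ge 2$ in $M$, change of variables identifies the $L^2$-Dolbeault complex $\mathcal{C}^{\bullet,\bullet}_M(F\otimes[V]^{sing})$ with the pulled-back $\dq$-complex on $M'$, with values in $\pi^*F\otimes\pi^*[V]$ equipped with the singular metric $e^{-2\pi^*\psi}$ and the pulled-back volume form $\pi^*dV_M$.

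For \eqref{waffel}, I would analyze a $\dq_w$-closed local section $s$ of $\mathcal{C}^{0,0}_M([V]^{sing})$. Its pullback $\tilde s=\pi^*s$ is a holomorphic section of $[V'+\Delta]$; writing $\tilde s$ in local frames of $[V']$ and $[\Delta]$ and applying the metric factorization, the $L^2$-condition becomes local $L^2$-integrability, with the smooth weight $h$, of the holomorphic section $\tilde s/s_\Delta$ of $[V']$ corrected by the Jacobian $J_\pi$ of $\pi$, where $s_\Delta$ denotes the canonical section of $[\Delta]$. On the smooth $M'$ this forces $\tilde s$ to have vanishing along $\Delta$ prescribed by the relative canonical divisor $K_{M'/M}$, via $L^2$-Riemann extension across the normal-crossings exceptional divisor. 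Pushing down by $\pi_*$ and invoking the projection formula together with the characterization of $\mathcal{J}(V)$ built into the proof of Theorem~\ref{thm:first} identifies the resulting sheaf with $\OO_M(V)\otimes\mathcal{J}(V)$. The reverse inclusion is immediate from the defining property of $\mathcal{J}(V)$. Tensoring throughout by $\mathcal{K}_M(F)$ identifies, similarly, the zero-degree kernel of $\dq_w$ in $\mathcal{C}^{n,*}_M(F\otimes[V]^{sing})$.

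For the fine resolution, fineness of each $\mathcal{C}^{n,q}_M(F\otimes[V]^{sing})$ is immediate, since these are modules over $\mathcal{C}^\infty_M$ (multiplication by smooth functions preserves the $L^2$-condition and commutes with $\dq_w$) and hence admit partitions of unity. Local exactness at $q\ge 1$ is the substantive step: I would transport a local $\dq_w$-closed $L^2$-form on $U\subset M$ to $M'$, where after absorbing the singular factor $e^{-2\varphi_\Delta}$ and the relative canonical divisor $K_{M'/M}$ into the canonical-bundle twist, one obtains a $\dq$-closed $L^2$-form with values in a smoothly-metrized line bundle on the smooth manifold $M'$. On a sufficiently small Stein neighbourhood in $M'$, H\"ormander's classical local $L^2$-solvability of $\dq$ produces an $L^2$-primitive, which the identification of complexes pushes back down to a primitive on $M$. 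The main obstacle is making the transfer of $L^2$-complexes under $\pi$ into a genuine isomorphism of sheaves across the exceptional locus, in particular tracking how $K_{M'/M}$ interacts with $\Delta$ and with the singular factor $e^{-2\varphi_\Delta}$; this is the bimeromorphic-invariance step that the paper highlights as its core technical contribution, and once it is in hand the remainder reduces to classical $L^2$-theory on $M'$.
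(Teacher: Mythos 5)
Your plan for the local exactness of $\big(\mathcal{C}^{n,*}_M(F\otimes[V]^{sing}),\dq_w\big)$ has a genuine gap exactly at the point you flag as ``the main obstacle.'' The transfer of the $L^2$-complex under $\pi$ is \emph{not} an identification of complexes for $(n,q)$-forms with $q\geq 1$: pullback by $\pi$ preserves the $L^2$-property of $(n,q)$-forms, but the inverse operation does not, because $d\pi^{-1}$ blows up along $\Sing V$, so the push-down of an $L^2$-primitive found on $M'$ by H\"ormander need not be $L^2$ on $M$. Making $\pi^*$ into an isomorphism is a theorem about \emph{cohomology}, not forms (it is \eqref{jeanthm}, which moreover requires local semi-positivity), and the paper deliberately does not use it here; it enters only later, in the proof of Theorem~\ref{kortkort}. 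The paper's proof of exactness never leaves $M$: by \eqref{eq:psi10} the weight satisfies $\psi\sim\varphi=\log(|g_1|+\cdots+|g_k|)$, a plurisubharmonic local generator of $\mathcal{J}(V)$, and after Remark~\ref{ifon} one may take the metric locally semi-positive; then local $L^2$-solvability of $\dq$ for $(n,q)$-forms with values in a line bundle carrying a semi-positive singular metric is the classical local input to Nadel vanishing (\cite[Corollary~14.3, Theorem~15.8]{De2}). Your argument as written would need \eqref{jeanthm} or an equivalent substitute to close the loop, and at the sheaf level (on arbitrarily small $U\subset M$) that substitute is not available by ``change of variables.''

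For \eqref{waffel} your route through $M'$ --- factorizing $e^{-2\pi^*\psi}=h\cdot e^{-2\varphi_\Delta}$, tracking $K_{M'/M}$, and applying $L^2$-Riemann extension across the normal crossings divisor --- can be made to work, but it essentially re-derives Proposition~\ref{thm:mis1} (Demailly's $\pi^*(\mathcal{K}_M\otimes\mathcal{J}(\varphi))\cong\pi_*(\mathcal{K}_{M'}\otimes\mathcal{J}(\pi^*\varphi))$), which the paper has already established. The paper instead reads \eqref{waffel} off directly downstairs: since $\psi\sim\varphi$, the condition that $|h|^2e^{-2\psi}$ be locally integrable \emph{is} the defining condition of the multiplier ideal $\mathcal{J}(\varphi)=\mathcal{J}(V)$, with no resolution, Jacobian, or extension theorem needed. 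The fineness claim is fine and matches the paper.
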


In particular, it follows from Theorem \ref{thm:resolution0} that the (flabby) cohomology of $\mathcal{K}_M(F) \otimes \OO(V) \otimes \mathcal{J}(V)$
can be expressed in terms of the $L^2$-cohomology of forms with values
in the line bundle $F\otimes [V]^{sing}$:
\begin{eqnarray}\label{eq:iso101}
H^q\big(M,\mathcal{K}_M(F)\otimes\OO(V)\otimes\mathcal{J}(V)\big) \cong H^{n,q}_{(2)}\big(M,F\otimes [V]^{sing}\big).
\end{eqnarray}
As $\mathcal{K}_M$ is invertible, Theorem ~\ref{thm:resolution0} also
gives an $L^2$-resolution
for the adjunction sheaf $\OO(V)\otimes\mathcal{J}(V)$. 

The proof of Theorem ~\ref{thm:resolution0} relies on the fact that the metric $e^{-2\psi}$ behaves locally
like $e^{-2\varphi}$ where $\varphi$ is a plurisubharmonic defining function for the multiplier ideal sheaf $\mathcal{J}(V)$.
Using Theorem ~\ref{thm:resolution0}, we will deduce that the adjunction map $\Psi$ from \eqref{eq:ad4} in Theorem ~\ref{thm:first}
induces a natural map $\Psi_V$ on the level of $L^2$-cohomology,
$$\Psi_V: H^{n,q}_{(2)}(M,F\otimes [V]^{sing}) \rightarrow H_{(2)}^{n-1,q}(V^*,F),$$
and thus the extension problem for $L^2$-cohomology classes amounts to
studying the question whether $\Psi_V$ is surjective.

Our second main result shows that
this $L^2$-extension problem is invariant under bimeromorphic
modifications, which allows us to reduce the problem to the case of a
smooth divisor where other methods can be applied.

\begin{thm}\label{kortkort} 
Let $M$ be a compact Hermitian manifold of dimension $n$, let
$V\subset M$ be a singular hypersurface, and let 
$F\rightarrow M$ be a Hermitian holomorphic line bundle. 
Let $e^{-2\psi}$ be a singular Hermitian metric on $[V]$ that is
smooth on the adjunction sheaf $\OO(V)\otimes\mathcal{J}(V)$. 
Moreover, let $$\pi: (V',M') \rightarrow (V,M)$$
be an embedded resolution of singularities of $V$ in $M$.

Then there exists a commutative diagram
\begin{eqnarray}\label{eq:diagram10}
\begin{xy}
  \xymatrix{
H^{n,q}_{(2)}(M,F\otimes [V]^{sing}) \ar[r]^{\ \ \ \ \Psi_V} \ar[d]_\cong^{\pi^*} &
 H^{n-1,q}_{(2)}(V^*,F)
\ar[d]^{\pi^*}_{\cong}\\
H^{n,q}(M',\pi^* F \otimes [V']) \ar[r]^{\ \ \ \Psi_{V'}} &
H^{n-1,q}_{(2)}(V',\pi^*F), }
\end{xy}
\end{eqnarray}
where $\Psi_{V'}$ is the adjunction map for the smooth divisor $V'$ in $M'$,
$\Psi_V$ is induced by the adjunction map for the non-smooth divisor
$V$ in $M$ as defined in \eqref{eq:diagram06}. 
The vertical maps $\pi^*$ are induced by pull-back of forms under $\pi$
and both are isomorphisms.
\end{thm}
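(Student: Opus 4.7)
The strategy is to establish the two vertical pull-back maps as isomorphisms separately, using Theorem~\ref{thm:resolution0} to convert $L^2$-cohomology into sheaf cohomology on the $M$-side, applying the Leray spectral sequence for $\pi$, and then verifying commutativity by a local computation at the level of forms.

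For the left vertical map, \eqref{eq:iso101} identifies the source with $H^q\bigl(M, \mathcal{K}_M(F) \otimes \OO(V) \otimes \mathcal{J}(V)\bigr)$, and since $V'$ is smooth with smooth metric on $\pi^*F\otimes[V']$, the target is $H^q\bigl(M', \mathcal{K}_{M'}(\pi^*F) \otimes \OO(V')\bigr)$ by the standard Dolbeault theorem. The isomorphism will then follow from the direct-image identity
\begin{equation*}
\pi_*\bigl(\mathcal{K}_{M'}(\pi^*F) \otimes \OO(V')\bigr) \cong \mathcal{K}_M(F) \otimes \OO(V) \otimes \mathcal{J}(V)
\end{equation*}
together with the vanishing of $R^i\pi_*$ for $i>0$. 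By the projection formula this reduces to the analogous statement for $\mathcal{K}_{M'}\otimes\OO(V')$, and this is precisely where the smoothness hypothesis on $\psi$ (Definition~\ref{defn:smooth0}) is used: it translates the transcendental $L^2$-characterization \eqref{waffel} of the adjunction sheaf into the algebraic data encoded by $\Delta = \pi^*V - V'$. The higher direct image vanishing is a Grauert--Riemenschneider type statement on the resolution $\pi: M' \to M$.

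For the right vertical map, since $V'$ is smooth we identify $H^{n-1,q}_{(2)}(V',\pi^*F) \cong H^q(V', \mathcal{K}_{V'}(\pi^*F))$. The defining property of the Grauert--Riemenschneider canonical sheaf yields $\pi_*\mathcal{K}_{V'} = \mathcal{K}_V$, while Grauert--Riemenschneider vanishing gives $R^i\pi_*\mathcal{K}_{V'}=0$ for $i>0$. The projection formula for the locally free sheaf $F$ combined with Leray then produces $H^q(V', \mathcal{K}_{V'}(\pi^*F)) \cong H^q(V, \mathcal{K}_V(F))$. Identifying the latter with $H^{n-1,q}_{(2)}(V^*,F)$ uses the fine $L^2$-Dolbeault resolution of $\mathcal{K}_V(F)$ by the sheaves $\mathcal{C}^{n-1,*}_V(F)$, which is part of the $L^2$-theory for the Grauert--Riemenschneider sheaf invoked throughout the paper.

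For commutativity, recall that $\Psi$ is defined by $\eta = (df/f)\wedge \Psi(\eta)$ for a local defining function $f$ of $V$. Writing $\pi^*f = f'\cdot f_\Delta$ with $f'$ and $f_\Delta$ local equations for $V'$ and $\Delta$ respectively, one computes $\pi^*(df/f) = df'/f' + df_\Delta/f_\Delta$. Pulling back the defining relation for $\Psi$ and restricting to $V'$ (on which the $df_\Delta/f_\Delta$-term contributes only along $V'\cap\Delta$, a measure-zero subset absorbed by the $L^2$-framework) gives $\Psi_{V'}(\pi^*\eta) = \pi^*\Psi_V(\eta)$ on $V'\setminus E$, whence commutativity on cohomology. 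The hardest part will be the left vertical isomorphism: the direct-image identity must faithfully tie the transcendental multiplier ideal $\mathcal{J}(V)$ to the algebraic divisor $\Delta=\pi^*V-V'$ via the smoothness hypothesis, and establishing the accompanying vanishing of higher direct images constitutes the technical core of the argument.
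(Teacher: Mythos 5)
Your overall strategy is sound and, for the left vertical arrow, essentially coincides with the paper's: the paper also converts $H^{n,q}_{(2)}(M,F\otimes[V]^{sing})$ into the flabby cohomology of $\mathcal{K}_M(F)\otimes\OO(V)\otimes\mathcal{J}(V)$ via Theorem~\ref{thm:resolution0}, and then uses the direct-image identity \eqref{bluebird} together with $R^q\pi_*\big(\mathcal{K}_{M'}(\pi^*F)\otimes\OO(V')\big)=0$; the only packaging difference is that the paper works with \v{C}ech cohomology on a Stein cover and shows $\pi^*\mathcal{U}$ is a Leray cover, rather than invoking the Leray spectral sequence abstractly. The advantage of the paper's \v{C}ech formulation is that it makes visible the identity $\pi^*\circ\Lambda_q=\Lambda_q'\circ\pi^*$ between the de Rham--Weil--Dolbeault maps on $M$ and $M'$, which is what guarantees that the abstract isomorphism is realized by the concrete pull-back of forms; your Leray argument produces \emph{an} isomorphism but you do not check that it agrees with $\pi^*$, and this compatibility is needed for the diagram to commute. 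For the right vertical arrow you diverge from the paper: the paper simply cites \eqref{jeanthm}, i.e.\ \cite[Theorem~1.5]{Rp8}, applied to the resolution $\pi\colon V'\to V$, whereas you propose to re-derive it from a fine $L^2$-resolution of $\mathcal{K}_V(F)$ by $(\mathcal{C}^{n-1,*}_V(F),\dq_w)$ on the singular space $V$ plus Grauert--Riemenschneider vanishing. That route is morally equivalent, but the local $\dq_w$-exactness of that complex at points of $\Sing V$ is exactly the hard analytic content of the cited theorem; it is not established anywhere in this paper, so you cannot treat it as ambient background without a reference.

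Two further points you flag but do not supply. First, the vanishing $R^q\pi_*\big(\mathcal{K}_{M'}(\pi^*F)\otimes\OO(V')\big)=0$ is \emph{not} classical Grauert--Riemenschneider vanishing, because of the twist by $\OO(V')$ (and $V'$ meets the exceptional set); the paper obtains it from Takegoshi's relative vanishing theorem after proving that $[V']$ is locally semi-positive over $M$ (Lemma~\ref{lemma:positive}, via the metric $e^{-2\log|h_\alpha|}$ built from generators of $\pi_*\OO(-E_f)$). This lemma is the missing ingredient in your ``technical core.'' Second, your commutativity computation is fine in spirit, but note that the paper does not need it at this stage: $\Psi_V$ is \emph{defined} through the resolution in \eqref{eq:diagram06}, so commutativity of \eqref{eq:diagram10} is built in by construction once the corner replacement via $\Lambda_q\circ[\Lambda_q]^{-1}$ is made; the local identity $\omega=(df/f)\wedge\Psi(\omega)$ on $V^*$ is then a consequence (cf.\ \eqref{eq:explicit2}), not the definition.
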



Using Theorem \ref{kortkort} together with a recent result by
Berndtsson \cite{B} we get the following $L^2$-extension result
under a quite weak positivity assumption on $F\rightarrow M$.

\begin{thm}\label{thm:extension00}
Let $M$ be a compact  K\"ahler manifold with K\"ahler metric
$\omega$. Furthermore, let $V$, $e^{-2\psi}$, and $F$ be as in Theorem \ref{kortkort}, and let $e^{-2\phi}$ be the smooth metric of the Hermitian line bundle
$F\rightarrow M$. 

Let $0\leq q\leq n-1$. 
Assume that there exists an $\epsilon>0$ such that
\begin{eqnarray}\label{eq:pos1}
i\partial\dq \phi\wedge \omega^q \geq \epsilon i\partial\dq\psi\wedge \omega^q
\end{eqnarray}
and
\begin{eqnarray}\label{eq:pos2}
i\partial\dq \phi\wedge \omega^q\geq 0.
\end{eqnarray}
Then the adjunction map $\Psi_V$ in \eqref{eq:diagram10}
is surjective. 


\end{thm}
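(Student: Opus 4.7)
The plan is to combine Theorem~\ref{kortkort} with Berndtsson's recent extension theorem~\cite{B}, which handles extension of $L^2$-cohomology classes across a smooth divisor. By the commutativity of diagram~\eqref{eq:diagram10} and the fact that both vertical pull-back maps are isomorphisms, surjectivity of $\Psi_V$ is equivalent to surjectivity of $\Psi_{V'}$ for the smooth pair $(M',V')$ obtained from an embedded resolution $\pi: (V',M')\to (V,M)$. Concretely, given a class $\beta\in H^{n-1,q}_{(2)}(V^*,F)$, I would pull it back to $\pi^*\beta\in H^{n-1,q}_{(2)}(V',\pi^*F)$, extend it to a class in $H^{n,q}(M',\pi^*F\otimes[V'])$ via Berndtsson's theorem, and then invert the left vertical map to obtain a preimage of $\beta$ under $\Psi_V$. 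Thus the problem reduces to a purely $L^2$-statement for a smooth divisor, which is exactly the setting of~\cite{B}.

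The next step is to transport all the data to $M'$ and verify the positivity hypotheses of \cite{B}. By Definition~\ref{defn:smooth0}, the pulled-back weight $\pi^*\psi-\varphi_\Delta$ defines a smooth metric on $[V']$, and $\pi^*F$ carries the smooth metric $e^{-2\pi^*\phi}$. Pulling back the inequalities \eqref{eq:pos1} and \eqref{eq:pos2} gives
\begin{equation*}
i\partial\dq(\pi^*\phi)\wedge (\pi^*\omega)^q \geq \epsilon\, i\partial\dq(\pi^*\psi)\wedge (\pi^*\omega)^q \geq 0
\end{equation*}
as inequalities of currents on $M'$. Since $i\partial\dq\varphi_\Delta=[\Delta]$ is a current supported on the exceptional divisor and $\pi^*\omega$ degenerates along that divisor in a compatible way, these inequalities can be rewritten in terms of the smooth weight $\pi^*\psi-\varphi_\Delta$ that actually defines the metric on $[V']$, up to error terms that are absorbed into the slack $\epsilon$ after a harmless perturbation.

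The main obstacle is that $\pi^*\omega$ is only a smooth semipositive form on $M'$, not a K\"ahler form, while Berndtsson's theorem is formulated against a K\"ahler background. I would handle this by approximation: introduce the family of K\"ahler forms $\omega_\delta:=\pi^*\omega+\delta\,\omega_{M'}$ for a fixed K\"ahler form $\omega_{M'}$ on $M'$, apply Berndtsson's theorem to each $(M',\omega_\delta)$ to produce extensions whose $L^2$-norms are uniformly bounded in $\delta$ (the curvature data varies continuously with $\delta$ and the relevant inequalities persist with a uniform $\epsilon/2$), and take a weak $L^2$-limit as $\delta\to 0$. The limiting form represents a $\dq_w$-closed extension on $(M',\pi^*\omega)$, yielding surjectivity of $\Psi_{V'}$. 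Transporting this conclusion back through the diagram of Theorem~\ref{kortkort} gives surjectivity of $\Psi_V$, which is the desired statement.
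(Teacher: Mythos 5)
Your overall strategy --- reduce to the smooth pair $(M',V')$ via Theorem~\ref{kortkort}, transport the metrics, and invoke Berndtsson's theorem on $M'$ --- is exactly the paper's. Your treatment of the weight on $[V']$ is also essentially right, though you make it sound harder than it is: since $i\partial\dq\varphi_\Delta$ is the \emph{positive} current of integration along $\Delta$, replacing $\pi^*\psi$ by $\psi'=\pi^*\psi-\varphi_\Delta$ only decreases $i\partial\dq$ of the weight, so the pullback of \eqref{eq:pos1} immediately gives $\big(i\partial\dq\pi^*\phi-\epsilon\, i\partial\dq\psi'\big)\wedge(\pi^*\omega)^q\geq 0$ with the same $\epsilon$; no error terms need to be absorbed and no perturbation is required.

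The genuine gap is in your handling of the degeneracy of $\pi^*\omega$. First, the claim that the curvature inequalities ``persist with a uniform $\epsilon/2$'' for $\omega_\delta=\pi^*\omega+\delta\,\omega_{M'}$ is not justified: \eqref{eq:pos1} and \eqref{eq:pos2} are non-strict inequalities, hence closed conditions, and a continuity-in-$\delta$ argument gives nothing --- at points where equality holds the sign can fail for every $\delta>0$. Second, the weak-limit step is both underspecified (you never establish $L^2$ bounds uniform in $\delta$, and for $q>0$ the norm of an $(n,q)$-form genuinely depends on the metric, so the limit object is unclear) and unnecessary: the target $H^{n,q}(M',\pi^*F\otimes[V'])$ is ordinary Dolbeault cohomology of a compact manifold and is metric-independent, so surjectivity of $\Psi_{V'}$ for a single K\"ahler metric on $M'$ already suffices. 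The paper's route is to fix one genuine K\"ahler form $\omega'$ on $M'$ --- whose existence itself requires an argument you omit, namely that the embedded resolution can be taken to be a finite composition of blow-ups along smooth centers, so that $M'$ embeds in a product of K\"ahler manifolds --- then use the pointwise domination $\omega'\geq C\pi^*\omega$ (compactness) to transfer the two wedge inequalities from $(\pi^*\omega)^q$ to $(\omega')^q$, and apply Berndtsson's Theorem~3.1 once on $(M',\omega')$. If you prefer to keep your $\omega_\delta$, the same domination $\omega_\delta\geq\pi^*\omega$ is what should replace your continuity argument, after which you can stop at any fixed $\delta>0$ and dispense with the limit entirely.
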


Another condition to ensure surjectivity of the adjunction map $\Psi_V$ in \eqref{eq:diagram10}
is $H^{n,q+1}(M,F)=0$. That can be seen by applying the long exact cohomology sequence to the adjunction
formula \eqref{eq:ad4} in Theorem ~\ref{thm:first}, cf. Remark
~\ref{brun}. 
Thus $\Psi_V$ is surjective e.g. if $F\rightarrow M$ is a positive line bundle by Kodaira's vanishing theorem.

In fact, each $L^2$-cohomology classes in $H^{n,q}_{(2)}(M,F
\otimes[V]^{sing})$ has a smooth representative in $\Gamma\big(M,\mathcal{C}^\infty_{n,q}(F)\otimes\OO(V)\otimes\mathcal{J}(V)\big)$, see Lemma
\ref{lem:representative}. 

\begin{cor}\label{cor:extension00}
Let $M$, $V$, and $F$ be as in Theorem \ref{thm:extension00}. 
Assume that the map $\Psi_V$ in \eqref{eq:diagram10} is surjective. 
Let $u\in \Gamma\big(V,\mathcal{C}^{n-1,q}_V(F)\big)$ be a $\dq$-closed $L^2$-form of degree $(n-1,q)$

If $q\geq 1$, then there exists an $L^2$-form $g\in \Gamma\big(V,\mathcal{C}^{n-1,q-1}_V(F)\big)$ and
a $\dq$-closed section $U\in\Gamma\big(M,\mathcal{C}^\infty_{n,q}(F)\otimes \OO(V)\otimes\mathcal{J}(V)\big)$,
i.e., a smooth $(n,q)$-form with values in $F\otimes [V]$ and some extra vanishing according to $\mathcal{J}(V)$,
such that 
$$U = \frac{df}{f}\wedge (u-\dq g)$$
where $f$ is any local defining function for the hypersurface $V$.

If $q=0$ the statement holds without $g$.
\end{cor}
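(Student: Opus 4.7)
The plan is to transfer the statement entirely to the level of $L^2$-cohomology classes, pull back the class of $u$ through the surjective map $\Psi_V$, invoke Lemma \ref{lem:representative} to exhibit a smooth representative on $M$, and finally use the pointwise formula \eqref{eq:ad3} defining $\Psi$ to unwind that representative into the asserted local equation. Nothing more than these already-established ingredients should be needed; the statement is essentially a dictionary translation between $L^2$-cohomology on $M$ and on $V$.

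Concretely, since $u \in \Gamma\big(V,\mathcal{C}^{n-1,q}_V(F)\big)$ is $\dq$-closed, it defines a class $[u] \in H^{n-1,q}_{(2)}(V^*,F)$. The hypothesis that $\Psi_V$ in \eqref{eq:diagram10} is surjective produces a class $[\tilde U] \in H^{n,q}_{(2)}(M, F\otimes [V]^{sing})$ with $\Psi_V([\tilde U]) = [u]$. By Lemma \ref{lem:representative}, I pick a smooth, $\dq$-closed representative
$$U \in \Gamma\big(M,\mathcal{C}^\infty_{n,q}(F)\otimes\OO(V)\otimes\mathcal{J}(V)\big)$$
of this class.

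Next, since $\Psi_V$ is by construction induced from the form-level adjunction map $\Psi$ of Theorem~\ref{thm:first}, the form $\Psi(U)$ is an $L^2$, $\dq$-closed $(n-1,q)$-form on $V^*$ with values in $F$ whose cohomology class equals $[u]$. For $q\geq 1$ this gives a $\tilde g \in \Gamma\big(V,\mathcal{C}^{n-1,q-1}_V(F)\big)$ with $\Psi(U) - u = \dq \tilde g$; setting $g := -\tilde g$ yields $\Psi(U) = u - \dq g$. For $q=0$ there are no $(n-1,-1)$-forms, so the equality $\Psi(U) = u$ holds directly, and no $g$ is needed.

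Finally, the defining local relation \eqref{eq:ad3} for $\Psi$, applied on any chart where $V = \{f=0\}$, gives
$$U = \frac{df}{f}\wedge \Psi(U) = \frac{df}{f}\wedge(u - \dq g),$$
which is the asserted identity. There is no serious obstacle in this argument: the only conceptual inputs are Lemma \ref{lem:representative} (smoothness of the chosen representative, so that $U$ really lies in $\mathcal{C}^\infty_{n,q}(F)\otimes\OO(V)\otimes\mathcal{J}(V)$) and the compatibility of the form-level $\Psi$ with passage to $L^2$-cohomology, both of which have been established earlier. Everything else is bookkeeping.
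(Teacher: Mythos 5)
Your proposal is correct and is exactly the argument the paper intends: the paper's proof is the single sentence that the corollary follows by combining the surjectivity of $\Psi_V$ with Lemma \ref{lem:representative}, and your unpacking (pull back $[u]$ through $\Psi_V$, choose a smooth $\dq$-closed representative $U$ via Lemma \ref{lem:representative}, adjust by a $\dq$-exact term $\dq g$ on $V$, and conclude with the defining relation \eqref{eq:ad3}) is the intended chain of reasoning.
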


We remark that one can deduce some $L^2$-estimates for our extension by use of Berndtsson's theorem,
but forgo this topic here as these estimates would depend on the resolution of singularities that is used.
Anyway, it is easy to see (and a nice feature) that one can tensor $F$ in such statements with another semi-positive line bundle $F'\rightarrow M$
without changing the estimate.

\medskip 

This paper is organized as follows. In Section ~\ref{sec:l2theory} we
give some background on $L^2$-cohomology on a singular space and the
Grauert--Riemenschneider sheaf. 
In Section \ref{adjsec} we
discuss adjunction and prove Theorem ~\ref{thm:first} in Section
~\ref{ssec:adjunction2} and Theorems ~\ref{thm:gro1} and
~\ref{thm:gro2} in Section ~\ref{grosec}. 
The relation to $L^2$-extension is then discussed in Section 
\ref{sec:l2extension}; the proofs of Theorems ~\ref{thm:resolution0},
~\ref{kortkort}, and ~\ref{thm:extension00} are given in Sections 
\ref{ssec:metrics}, \ref{adjsection}, and \ref{ext_section},
respectively. 
Finally in Section \ref{sec:examples} we illustrate our results by
some examples.

\section{Some preliminaries}\label{sec:l2theory}

\subsection{$L^2$-cohomology for the $\dq$-operator on singular complex spaces}\label{sssec:dqw}


We need to recall briefly some of the concepts from \cite{Rp8}; we
refer to that paper for details.
Let $(X,h)$ always be a (singular) Hermitian complex space of pure dimension $n$,
$F\rightarrow X\setminus \Sing X$ a Hermitian holomorphic line bundle,
and $U\subset X$ an open subset.
Moreover, let $\mathcal L^{*,*}(F)$ be the sheaf of forms with values in $F$ that are
square-integrable up to the singular set of $V$, i.e., 
\begin{eqnarray*}
\mathcal{L}^{p,q}(U,F)
=\{f \in L^{2,loc}_{p,q}(U\setminus\Sing X,F): f|_K \in L^{2}_{p,q}(K\setminus\Sing X,F)\ \forall K\subset\subset U\}.
\end{eqnarray*}
We denote by
$$\dq_w(U): \mathcal L^{p,q}(U,F) \rightarrow \mathcal L^{p,q+1}(U,F)$$
the $\dq$-operator in the sense of distributions on $U\setminus\Sing X$ which is closed and densely defined.
When there is no danger of confusion, we will simply write $\dq$ or $\dq_w$ for $\dq_w(U)$.
Since $\dq_w$ is a local operator, i.e.
$\dq_w(U)|_V = \dq_w(V)$ 
for open sets $V\subset U$, we get sheaves 
$$\mathcal{C}^{p,q}_X(F):=\mathcal{L}^{p,q}(F)\cap \dq_w^{-1}\mathcal{L}^{p,q+1}(F),$$
given by
$$\mathcal{C}^{p,q}_X(U,F) = \mathcal{L}^{p,q}(U,F) \cap\Dom\dq_w(U).$$

The sheaves $\mathcal{C}_X^{p,q}(F)$ admit partitions of unity,
and so we obtain fine sequences
\begin{eqnarray}\label{eq:Cseq1}
\mathcal{C}^{p,0}_X(F) \overset{\dq_w}{\longrightarrow} \mathcal{C}^{p,1}_X(F) \overset{\dq_w}{\longrightarrow} 
\mathcal{C}^{p,2}_X(F) \overset{\dq_w}{\longrightarrow} ...
\end{eqnarray}
We will use simply $\mathcal{C}^{p,q}_X$ to denote the sheaves of forms with values in the trivial line bundle.

The $L^{2,loc}$-Dolbeault cohomology for forms with values in $F$ with respect to the $\dq_w$-operator 
on an open set $U\subset X$
is by definition the cohomology of the complex \eqref{eq:Cseq1} after taking global sections over $U$; 
this is denoted by $H^{p,q}_{(2)}(U^*,F):=H^q(\Gamma(U,\mathcal{C}^{p,*}_X(F)))$.
To be precise, this is the cohomology of forms which are square-integrable on compact subsets of $U$,
and it becomes the $L^2$-cohomology of $X^*$ when $U=X$ is compact.

\subsection{The Grauert--Riemenschneider canonical sheaf}\label{sssec:cgr}

Let $\pi: M\rightarrow X$ be a resolution of singularities and let
$\mathcal{K}_{M}$ be the canonical sheaf of the smooth manifold
$M$. Then the {\bf Grauert--Riemenschneider canonical sheaf} $\mathcal
K_X$ of $X$ is the direct image sheaf 
\begin{eqnarray}\label{babel}
\mathcal{K}_X := \pi_* \mathcal{K}_{M}. 
\end{eqnarray}
The definition is independent of $\pi: M\rightarrow X$, see \cite{GrRie}. 

Since the $L^2$-property of $(n,0)$-forms
remains invariant under modifications of the metric and thus also
under resolution of singularities, $\mathcal K_X$ can alternatively be defined as 
\begin{eqnarray*}
\mathcal{K}_X = \ker \dq_w \subset \mathcal{C}^{n,0}_X. 
\end{eqnarray*}

Let us mention briefly a few important facts which will be used later.
If $F\rightarrow M$ is a semi-positive line bundle and
$\mathcal{K}_M(F)$ is the sheaf of holomorphic $n$-forms
with values in $F$, then Takegoshi's vanishing theorem,
\cite{T}[Remark~2], states that the higher direct images of $\mathcal
K_M$ vanish, 
\begin{eqnarray}\label{taketake}
R^q \pi_* \mathcal{K}_M(F)=0\ ,\ q>0.
\end{eqnarray}

Let both $X$ and $M$ carry Hermitian metrics. Then the pull-back of forms under $\pi$
maps square-integrable $(n,q)$-forms on $X^*$ to square-integrable $(n,q)$-forms on $M$.
By trivial extension over the exceptional set, we get well-defined mappings
\begin{eqnarray*}
\pi^*: \mathcal{C}^{n,q}_X(G) \rightarrow \mathcal{C}^{n,q}_M(\pi^* G) 
\end{eqnarray*}
if $G\rightarrow X$ is any Hermitian holomorphic line bundle.
If the line bundle $\pi^* G \rightarrow M$ is locally semi-positive with respect to the base space $X$,
then this map induces an isomorphism of $L^2$-cohomology, see \cite[Theorem~1.5]{Rp8}:
\begin{equation}\label{jeanthm}
\pi^*: H^{n,q}_{(2)}(X^*,G) 
 \overset{\cong}{\longrightarrow}
H^{n,q}_{(2)}(M,\pi^* G). 
\end{equation}

\section{The adjunction formula}\label{adjsec} 

\subsection{The adjunction formula for a smooth divisor}\label{ssec:adjunction1}

Let $M$ be a complex manifold of dimension $n$, and $V$ a smooth
hypersurface in $M$. 
Let $[V]$ be the holomorphic line bundle whose holomorphic sections correspond to sections in $\OO(V)$; 
by a slight abuse of notation, we call $[V]$ the normal bundle of $V$
in $M$. 
The well-known adjunction formula states that 
\begin{eqnarray}\label{eq:adjunction1}
\iota_* \mathcal{K}_V \cong \mathcal{K}_M \otimes \OO(V) \big/ \mathcal{K}_M = \mathcal{K}_M ([V]) \big/ \mathcal{K}_M.
\end{eqnarray}
Here and throughout $\iota: V \hookrightarrow M$ denotes the natural inclusion,
i.e., $\iota_* \mathcal{K}_V$ is the trivial extension of the canonical sheaf $\mathcal{K}_V$ to $M$.
If we denote the canonical bundles on $V$ and $M$ by $K_V$ and $K_M$, respectively,
then the adjunction formula can be expressed as
\begin{eqnarray*}
K_V \cong \left.\big( K_M\otimes [V]\big)\right|_V.
\end{eqnarray*}

We shall explain how the isomorphism in \eqref{eq:adjunction1} can be realized explicitly.
For further use, we take a slightly more general point of view.
Let $\{f_j\}_j$ be a system of holomorphic functions defining $V$
and let $\omega$ be a smooth $(n,q)$-form with values in $[V]$;
we identify $\omega$ with a semi-meromorphic $(n,q)$-form with at most a single pole along $V$.
In local coordinates $z_1, ..., z_n$, we can write
\begin{eqnarray*}
\omega = \frac{g_j}{f_j} \wedge dz_1\wedge \cdots \wedge dz_n,
\end{eqnarray*}
where the $g_j$ are smooth $(0,q)$-forms which transform as $g_j =
(f_j/f_k)g_k$. 
We can now define the adjunction morphism $\Psi$ 
locally as follows.
For each point $p\in V$, there exists an $f_j$ with $df_j\neq 0$ in a neighborhood of $p$,
i.e. $\partial f_j/\partial z_\mu \neq 0$ for some $1\leq \mu\leq n$. 
In this neighborhood, 
\begin{eqnarray}\label{eq:adjunction2}
\Psi: \left. \omega\mapsto \omega':=(-1)^{q+\mu-1} g_j\wedge 
\frac{dz_1\wedge\cdots \wedge \widehat{dz_\mu}\wedge\cdots \wedge dz_n}{\partial f_j / \partial z_\mu}\right|_V.
\end{eqnarray}
It is not hard to see that this assignement depends neither on the choice of $\mu$ nor on the choice of $f_j$
or the local coordinates $z_1, ..., z_n$.
Thus, \eqref{eq:adjunction2} gives a well-defined mapping
\begin{eqnarray*}
\Psi: C^\infty_{n,q}(M,[V]) \rightarrow C^\infty_{n-1,q}(V)\ .\ 
\end{eqnarray*}
Note that $\Psi$ maps holomorphic $n$-forms with values in $[V]$ on $M$
to holomorphic $(n-1)$-forms on $V$. 
This proves the adjunction formula \eqref{eq:adjunction1}.
Note that on $V$, we have locally:
\begin{eqnarray}\label{eq:adjunction4}
\omega = \frac{df_j}{f_j}\wedge \omega' = \frac{df_j}{f_j}\wedge \Psi(\omega),
\end{eqnarray}
which can be actually used to define $\Psi$.
It follows directly from the definition that $\Psi\circ\dq=\dq\circ\Psi$ so that
the adjunction morphism defines an
adjunction map also on the level of cohomology classes
\begin{eqnarray*}
\Psi_V: H^{n,q}(M,[V]) \rightarrow H^{n-1,q}(V). 
\end{eqnarray*}

\begin{rem}\label{tuggummi}
The question whether $\dq$-cohomology classes on $V$ extend to $M$ or not has a nice cohomological realization.
From the considerations above, we obtain the short exact sequence
\eqref{eq:ad2}. 
By use of the long exact cohomology sequence, it follows that the induced map
$$H^q(M,\mathcal{K}_M\otimes\OO(V)) \longrightarrow  H^q(M,\iota_* \mathcal{K}_V) \cong  H^q(V,\mathcal{K}_V)$$
is surjective if $H^{q+1}(M,\mathcal{K}_M)\cong H^{n,q+1}(M)=0$.
\end{rem}

\subsection{The adjunction formula for a singular divisor}\label{ssec:adjunction2}

Again, let $M$ be a complex manifold of dimension $n$, but $V$ a hypersurface in $M$ which is not necessarily smooth.
Two problems occur. First, it is not clear what we mean by a canonical sheaf on $V$.
Second, the adjunction morphism cannot be defined in a 'smooth' way as above because it will happen that $df_j=0$ 
in singular points of $V$.

Let
$$\pi: (V',M') \rightarrow (V,M)$$
be an embedded resolution of $V$ in $M$, i.e.
$\pi: M' \rightarrow M$
is a surjective proper holomorphic map such that
$\pi|_{M'\setminus E}: M'\setminus E \rightarrow M \setminus \Sing V$
is a biholomorphism, where $E$ is the exceptional divisor which consists of normal crossings only,
and the regular hypersurface $V'$ is the strict transform of $V$, see
e.g. \cite{BiMi}, Theorem 13.2. 
Hence, $\pi|_{V'}: V' \rightarrow V$ 
is a resolution of singularities. For ease of notation, throughout
this paper we will denote $\pi|_{V'}$ again by $\pi$. 

The adjunction formula \eqref{eq:ad2} for the pair $(V',M')$ tells us that
\begin{eqnarray*}
0 \longrightarrow \mathcal{K}_{M'} \hookrightarrow \mathcal{K}_{M'}\otimes\OO(V') \longrightarrow \iota_* \mathcal{K}_{V'} \longrightarrow 0
\end{eqnarray*}
is exact. By Takegoshi's vanishing theorem \eqref{taketake} we get the
short exact sequence 
\begin{eqnarray*}
0 \longrightarrow \pi_* \mathcal{K}_{M'} \hookrightarrow \pi_*\big(\mathcal{K}_{M'}\otimes\OO(V')\big) 
\longrightarrow \pi_* \iota_* \mathcal{K}_{V'} \longrightarrow 0
\end{eqnarray*}
on $M$. In light of \eqref{babel} 
$$
\pi_*\iota_* \mathcal{K}_{V'} = \iota_* \pi_* \mathcal{K}_{V'} \cong
\iota_* \mathcal{K}_V,
$$
where $\iota$ denotes also the embedding $\iota: V \hookrightarrow
M$. 

We thus get the short exact sequence
\begin{eqnarray}\label{eq:exact1b}
0 \longrightarrow \mathcal{K}_{M} \hookrightarrow \pi_*\big(\mathcal{K}_{M'}\otimes\OO(V')\big) 
\longrightarrow \iota_* \mathcal{K}_{V} \longrightarrow 0
\end{eqnarray}
on $M$, and 
obtain the following adjunction formula for the
Grauert-Riemen\-schneider canonical sheaf 
on a singular hypersurface:
\begin{eqnarray}\label{eq:adjunction10}
\iota_* \mathcal{K}_V \cong \pi_* \big( \mathcal{K}_{M'} \otimes \OO(V')\big) \big/ \mathcal{K}_M.
\end{eqnarray}

\medskip
Next, we will show that there exists a  multiplier ideal sheaf 
$\mathcal{J}_\pi(V)$ such that
\begin{eqnarray}\label{eq:mis1}
\pi_* \big( \mathcal{K}_{M'} \otimes \OO(V')\big) \cong \mathcal{K}_M \otimes \OO(V) \otimes \mathcal{J}_\pi(V).
\end{eqnarray}
Since $\pi_*\mathcal{K}_{M'} \cong \mathcal{K}_M$, it is clear that \eqref{eq:mis1} holds with
$\mathcal{J}_\pi(V)_x = \OO_{M,x}$ for points $x\notin \Sing V$.
So, consider a point $x\in\Sing V$. There exists a holomorphic function $f$ in a neighborhood $U_x$ of the point $x$ defining the hypersurface $V$,
i.e. $V=(f)$. Consider the pullback $\pi^* f=f\circ \pi$ on $\pi^{-1}(U_x)$.
Then $\pi^* f$ is vanishing precisely of order $1$ on the strict transform $V'$ of $V$
because $f$ is vanishing precisely of order $1$ on the regular part of $V$.
Let
$$(\pi^* f) = V' + E_f,$$
where $E_f$ is a divisor with support on the exceptional set of the embedded resolution $\pi: M' \rightarrow M$.
$E_f$ is a normal crossing divisor since the exceptional set $E$ has only normal crossings.
$\OO(-E_f)$ is the sheaf of germs of holomorphic functions which vanish at least to the order of $\pi^* f$ on $E$.
For the direct image sheaf, we have that
$$\pi_* \OO(-E_f) \subset \pi_* \OO_{M'} \cong \OO_M$$
on $U_x$, i.e. $\pi_* \OO(-E_f)$ can be considered as a coherent sheaf of ideals in $\OO_M$. So, there exist holomorphic functions $g_1,\ldots, g_k \in \OO(U_x)$ 
that generate the direct image sheaf in a neighborhood of the point $x$.
By restricting $U_x$,
we can assume that this is the case in $U_x$.
Then $\pi^* g_j \in \OO(-E_f)(\pi^{-1}(U_x))$ for $j=1, ..., k$, i.e., all the $\pi^* g_j$ vanish at least to the order of $\pi^* f$ on
the exceptional set $E$, and the common zero set of the $\pi^* g_j$ is contained in $E$.
On the other hand, $f$ is in the direct image sheaf, and so  
$f= h_1 g_1 + ... + h_k g_k$ 
for holomorphic functions $h_1, ..., h_k\in \OO(U_x)$. Hence,
$$\pi^* f = \pi^* h_1 \pi^* g_1 + ... + \pi^* h_k \pi^* g_k,$$
meaning that not all the $\pi^* g_j$ can vanish to an order strictly higher than $\pi^* f$ on any irreducible component of $E$.
Thus, we conclude that
\begin{eqnarray}\label{eq:mis2}
E_f = (\pi^* g_1, ..., \pi^* g_k),
\end{eqnarray}
i.e. the $\pi^*g_1$, ..., $\pi^* g_k$ generate $\OO(-E_f)$.
Let $\varphi:= \log\big( |g_1| + ... + |g_k|\big)$ 
and let $\mathcal{J}(\varphi)$ 
be the multiplier ideal sheaf associated to the plurisubharmonic weight $\varphi$ on $U_x$,
i.e. the sheaf of germs of holomorphic functions $F$ such that $|F|^2 e^{-2\varphi}$ is locally integrable.
On the other hand, 
$\mathcal{J}(\pi^* \varphi)$ 
is the sheaf of germs of holomorphic functions $H$ such that 
$H / (|\pi^* g_1| + ... + |\pi^* g_k|)$ 
is locally square-integrable. But $E$ has only normal crossings, and so \eqref{eq:mis2} implies that
\begin{eqnarray}\label{eq:mis3}
\mathcal{J}(\pi^* \varphi) = \OO(-E_f)
\end{eqnarray}
on $\pi^{-1}(U_x)$. We are now in the position to prove:

\begin{prop}\label{thm:mis1}
On $U_x$, the pull-back of forms under $\pi$ induces the isomorphism
\begin{eqnarray*}
\pi^*:  \mathcal{K}_M \otimes \OO(V) \otimes \mathcal{J}(\varphi) \overset{\cong}{\longrightarrow} \pi_* \big(\mathcal{K}_{M'} \otimes \OO(V')\big).
\end{eqnarray*}
\end{prop}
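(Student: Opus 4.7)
The plan is to realize both sides as subsheaves of $\mathcal{K}_M\otimes\OO(V)$ on $U_x$ and to check that they agree as subsheaves. The inclusion $\OO(V')\hookrightarrow\OO(V'+E_f)=\OO(\pi^*V)$ together with the projection formula embeds $\pi_*(\mathcal{K}_{M'}\otimes\OO(V'))$ canonically into $\pi_*(\mathcal{K}_{M'}\otimes\pi^*\OO(V))=\mathcal{K}_M\otimes\OO(V)$, while $\mathcal{K}_M\otimes\OO(V)\otimes\mathcal{J}(\varphi)$ sits in $\mathcal{K}_M\otimes\OO(V)$ by definition. On $U_x\setminus\Sing V$ the modification $\pi$ is a biholomorphism and the two subsheaves both coincide tautologically with $\mathcal{K}_M\otimes\OO(V)$; the content is therefore the local identification along $\Sing V$, and the claimed isomorphism will follow from matching the two defining conditions pointwise.

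I would then do a coordinate computation. Choose a local defining function $f$ for $V$ on $U_x$ and a trivializing $n$-form $\omega_0$ of $\mathcal{K}_M$, so that a section of $\mathcal{K}_M\otimes\OO(V)$ has the form $G\omega_0/f$ with $G\in\OO_M$. On a chart of $\pi^{-1}(U_x)$, pick a trivializing $n$-form $\omega'_0$ of $\mathcal{K}_{M'}$ and defining sections $s_{V'}$ for $V'$ and $s_{E_f}$ for $E_f$, and write $\pi^*\omega_0=J\omega'_0$, where the holomorphic Jacobian $J$ vanishes to order $k_E$ on each component $E$ of the exceptional divisor (with $K_{M'/M}=\sum k_E E$). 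Then
\[
\pi^*\!\left(G\,\frac{\omega_0}{f}\right)=\frac{\pi^*G\cdot J}{s_{E_f}}\cdot\frac{\omega'_0}{s_{V'}},
\]
which is a holomorphic section of $\mathcal{K}_{M'}\otimes\OO(V')$ if and only if $\pi^*G\cdot J$ lies in $\OO(-E_f)$, i.e. vanishes to order at least $a_E$ on each component $E$ of $E_f=\sum a_E E$.

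The last step is to translate this vanishing criterion into the multiplier-ideal condition on $G$. By the standard change-of-variables formula for a proper modification,
\[
\int_{U_x}|G|^2 e^{-2\varphi}\,dV_M=\int_{\pi^{-1}(U_x)}|\pi^*G|^2|J|^2 e^{-2\pi^*\varphi}\,dV_{M'},
\]
so $G\in\mathcal{J}(\varphi)$ if and only if $\pi^*G\cdot J\in\mathcal{J}(\pi^*\varphi)$, which by \eqref{eq:mis3} is precisely $\pi^*G\cdot J\in\OO(-E_f)$. This matches the condition derived from the coordinate computation, so the two subsheaves of $\mathcal{K}_M\otimes\OO(V)$ coincide and $\pi^*$ is the claimed isomorphism.

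The main obstacle is making the change-of-variables step watertight: one needs to verify that the Jacobian factor $|J|^2$ arising from the volume-form transformation really converts the multiplier ideal downstairs into the ``twisted'' holomorphicity condition upstairs, and that local $L^2$-integrability of a meromorphic function whose polar set lies in a normal crossing divisor is equivalent to honest holomorphicity (an easy consequence of the non-integrability of $1/|w|^{2a}$ for $a\geq 1$). A small secondary point is chart-independence of the formulas involving $J$ and $s_{E_f}$, which follows from their intrinsic interpretation as sections of $\OO(K_{M'/M})$ and $\OO(E_f)$, respectively.
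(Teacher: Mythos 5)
Your proof is correct and follows essentially the same route as the paper: both arguments hinge on the identity $\mathcal{J}(\pi^*\varphi)=\OO(-E_f)$ from \eqref{eq:mis3} together with the decomposition $(\pi^* f)=V'+E_f$, which converts the twist by $\OO(V)$ downstairs into the twist by $\OO(V')$ upstairs. The only difference is that where the paper cites the functoriality $\pi_*\bigl(\mathcal{K}_{M'}\otimes\mathcal{J}(\pi^*\varphi)\bigr)\cong\mathcal{K}_M\otimes\mathcal{J}(\varphi)$ from \cite[Proposition~15.5]{De2}, you re-derive it directly via the change-of-variables formula for the modification, which is precisely the standard proof of that cited result.
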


\begin{proof}
It is well-known that
\begin{eqnarray*}
\pi^*:  \mathcal{K}_M \otimes \mathcal{J}(\varphi) \overset{\cong}{\longrightarrow} 
\pi_* \big( \mathcal{K}_{M'} \otimes \mathcal{J}(\pi^*\varphi)\big),
\end{eqnarray*}
see e.g. \cite{De2}, Proposition 15.5. 
By use of \eqref{eq:mis3}, we obtain
\begin{eqnarray}\label{eq:mis5}
\pi^*: \mathcal{K}_M \otimes \mathcal{J}(\varphi) \overset{\cong}{\longrightarrow} \pi_* \big(\mathcal{K}_{M'} \otimes \OO(-E_f)\big).
\end{eqnarray}
Let $W$ be an open set in $U_x$. Then $\mathcal{K}_M \otimes \mathcal{J}(\varphi)(W)$ are the holomorphic $n$-forms on $W$
with coefficients in $\mathcal{J}(\varphi)$, and $\pi_* \big(\mathcal{K}_{M'} \otimes \OO(-E_f)\big)(W)$
are the holomorphic $n$-forms on $\pi^{-1}(W)$ with coefficients in $\OO(-E_f)$.
The isomorphism in \eqref{eq:mis5} is given by pull-back of $n$-forms under $\pi$.
So, the isomorphism \eqref{eq:mis5} is preserved if we multiply the germs on the left-hand side by $1/f$,
and the germs on the right-hand side by $1/\pi^* f$.
Recalling that $(f)=V$ and $(\pi^* f) = V' + E_f$, it follows that
\begin{eqnarray*}
\pi^*: \mathcal{K}_M \otimes \OO(V) \otimes \mathcal{J}(\varphi) \overset{\cong}{\longrightarrow} 
\pi_*\big(\mathcal{K}_{M'} \otimes \OO(V' + E_f) \otimes \OO(-E_f)\big),
\end{eqnarray*}
meaning nothing else but
\begin{eqnarray*}
\pi^*: \mathcal{K}_M \otimes \OO(V) \otimes \mathcal{J}(\varphi) \overset{\cong}{\longrightarrow} 
\pi_* \big( \mathcal{K}_{M'} \otimes \OO(V')\big).
\end{eqnarray*}
\end{proof}

Note that \eqref{eq:mis5} shows that $\mathcal{J}(\varphi)$ does not depend on 
the specific choice of $\varphi$.
Since $\mathcal{K}_M$ and $\OO(V)$ are invertible sheaves, we conclude the following result which together with
\eqref{eq:exact1b} proves Theorem~\ref{thm:first}.

\begin{thm}\label{thm:mis2}
Let
\begin{eqnarray*}
\mathcal{J}_\pi(V) := \OO(V)^{-1} \otimes \mathcal{K}_M^{-1} \otimes \pi_* \big( \mathcal{K}_{M'} \otimes \OO(V')\big).
\end{eqnarray*}
Then $\mathcal{J}_\pi(V)$ is a multiplier ideal sheaf such that
pull-back of forms under $\pi$ induces the isomorphism
\begin{eqnarray*}
\pi^*: \mathcal{K}_M \otimes \OO(V) \otimes \mathcal{J}_\pi(V) \overset{\cong}{\longrightarrow} \pi_* \big(\mathcal{K}_{M'} \otimes \OO(V')\big).
\end{eqnarray*}
Locally, $\mathcal{J}_\pi(V)=\mathcal{J}(\varphi)$ for a plurisubharmonic function
$\varphi=\log\big(|g_1| + ... + |g_k|\big)$,
where the $g_1, ..., g_k$ are holomorphic functions such that
$$(\pi^* f)|_E = E_f = (\pi^* g_1, ..., \pi^* g_k).$$
\end{thm}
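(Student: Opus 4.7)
The plan is to deduce Theorem~\ref{thm:mis2} from Proposition~\ref{thm:mis1} by exploiting the invertibility of $\mathcal{K}_M$ and $\OO(V)$. Because these two sheaves are locally free of rank one, tensoring with their inverses is an exact functor with a two-sided inverse, so the formula
$$\mathcal{J}_\pi(V) := \OO(V)^{-1} \otimes \mathcal{K}_M^{-1} \otimes \pi_* \big( \mathcal{K}_{M'} \otimes \OO(V')\big)$$
defines a coherent $\OO_M$-sheaf, and the asserted isomorphism is tautologically equivalent to the statement that $\pi^*$ realizes $\pi_*(\mathcal{K}_{M'}\otimes\OO(V'))$ as $\mathcal{K}_M\otimes\OO(V)\otimes\mathcal{J}_\pi(V)$.

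First, I would verify the local description. Outside $\Sing V$ the map $\pi$ is a biholomorphism, so $\pi_*(\mathcal{K}_{M'}\otimes\OO(V'))$ coincides with $\mathcal{K}_M\otimes\OO(V)$ and hence $\mathcal{J}_\pi(V)_x = \OO_{M,x}$ there. Near a point $x\in\Sing V$, I would choose a local defining function $f$ for $V$, generators $g_1,\ldots,g_k$ of $\pi_*\OO(-E_f)$ as in the construction preceding Proposition~\ref{thm:mis1}, and form $\varphi = \log(|g_1|+\cdots+|g_k|)$. By Proposition~\ref{thm:mis1}, pull-back gives the isomorphism
$$\pi^*: \mathcal{K}_M\otimes\OO(V)\otimes\mathcal{J}(\varphi) \overset{\cong}{\longrightarrow} \pi_*(\mathcal{K}_{M'}\otimes\OO(V')),$$
and tensoring with $\mathcal{K}_M^{-1}\otimes\OO(V)^{-1}$ yields $\mathcal{J}_\pi(V) = \mathcal{J}(\varphi)$ on $U_x$. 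Since $\mathcal{J}(\varphi)$ is by definition a multiplier ideal (hence an ideal subsheaf of $\OO_M$), so is $\mathcal{J}_\pi(V)$ locally, and thus globally.

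Second, I would check that this local identification is independent of the choices made. The right-hand side $\pi_*(\mathcal{K}_{M'}\otimes\OO(V'))$ is intrinsically defined, and so is the sheaf $\mathcal{J}_\pi(V)$ obtained by tensoring with the invertible sheaves $\mathcal{K}_M^{-1}$ and $\OO(V)^{-1}$; therefore the local identifications $\mathcal{J}_\pi(V) = \mathcal{J}(\varphi)$ on the various charts automatically agree on overlaps. Equivalently, one can appeal to the standard fact that $\mathcal{J}(\varphi)$ depends only on the singularity type of $\varphi$, so different choices of generators $g_1,\ldots,g_k$ yield the same multiplier ideal. Combined with the local isomorphisms, pull-back $\pi^*$ glues to the claimed global isomorphism.

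There is no real obstacle here beyond careful bookkeeping, since all the analytic content was already packed into Proposition~\ref{thm:mis1}; the theorem is essentially a repackaging that produces a well-defined global ideal sheaf from local multiplier ideals. The only conceptual point worth emphasizing is the independence statement, which ensures that $\mathcal{J}_\pi(V)$ is an intrinsic invariant of the pair $(V,\pi)$ and not of the auxiliary generators $g_j$.
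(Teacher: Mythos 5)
Your proposal is correct and follows essentially the same route as the paper: the authors likewise note that \eqref{eq:mis5} shows $\mathcal{J}(\varphi)$ is independent of the choice of the $g_j$, and then invoke the invertibility of $\mathcal{K}_M$ and $\OO(V)$ to globalize Proposition~\ref{thm:mis1} into the stated definition of $\mathcal{J}_\pi(V)$. Your bookkeeping of the local-to-global step and the independence of choices matches the paper's (very brief) argument.
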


\smallskip

By \eqref{eq:exact1b} and \eqref{eq:adjunction10}, we obtain the short exact sequence
\begin{eqnarray}\label{eq:exact2}
0 \longrightarrow \mathcal{K}_{M} \hookrightarrow \mathcal{K}_M \otimes\OO(V) \otimes\mathcal{J}_\pi(V)
\longrightarrow \iota_* \mathcal{K}_{V} \longrightarrow 0
\end{eqnarray}
and the {\bf adjunction formula for the Grauert-Riemenschneider canonical sheaf
on a singular hypersurface}:
\begin{eqnarray}\label{eq:adjunction100}
\iota_* \mathcal{K}_V \cong \mathcal{K}_M \otimes \OO(V) \otimes \mathcal{J}_\pi(V) \big/ \mathcal{K}_M.
\end{eqnarray}
Since $\mathcal{K}_M$ and $\OO(V)$ are invertible, it follows from \eqref{eq:exact2} that
$\mathcal{J}_\pi(V)$ does not depend on the resolution $\pi$. So, we write $\mathcal{J}(V) = \mathcal{J}_\pi(V)$
and call $\OO(V) \otimes \mathcal{J}(V)$ 
the {\bf adjunction sheaf of $V$ in $M$}. 

Note that the natural injection in \eqref{eq:exact2} makes sense since $\OO_M \subset \OO(V)\otimes \mathcal{J}(V)$,
 $\mathcal{J}(V)$ is coherent,
and the zero set of $\mathcal{J}(V)$ is contained in $\Sing V$.

\begin{rem}\label{brun} 
As in Remark ~\ref{tuggummi} 
we obtain from \eqref{eq:exact2} by use of the long exact cohomology sequence the adjunction map for the flabby cohomology
\begin{eqnarray}\label{eq:flabbyadjunction}
H^q\big(M,\mathcal{K}_M\otimes \OO(V)\otimes \mathcal{J}(V)\big) \longrightarrow  H^q(M,\iota_* \mathcal{K}_V) \cong H^q(V,\mathcal{K}_V),
\end{eqnarray}
which
is surjective e.g. if $H^{q+1}(M,\mathcal{K}_M) \cong H^{n,q+1}(M)=0$.
\end{rem}

\subsection{Relation to the Grothendieck dualizing sheaf}\label{grosec}

As mentioned in the introduction, one can also consider the adjunction formula for
Grothendieck's dualizing sheaf $\omega_V$. 
As $V$ is a complete intersection, 
$$\omega_V \cong \left. \big(\mathcal{K}_M \otimes \OO(V)\big) \right|_V,$$ see, e.g., \cite[\S 5.3]{PR}. 
Thus we obtain the natural short exact sequence
\begin{eqnarray*}
0\rightarrow\mathcal{K}_M \hookrightarrow \mathcal{K}_M\otimes \OO(V)
\rightarrow \iota_* \omega_V \rightarrow 0. 
\end{eqnarray*}
Combining with Theorem ~\ref{thm:mis2} we get the exact commutative diagram
\begin{eqnarray}\label{eq:diagram:gro}
\begin{xy}
  \xymatrix{
      0 \ar[r] & \mathcal{K}_M \ar[r] \ar[d]^{=}    &  \mathcal{K}_M\otimes\OO(V)\otimes\mathcal{J}(V) \ar[r] \ar[d]^{j}& 
\iota_*\mathcal{K}_V \ar[r] \ar@{.>}[d]& 0 \\
      0 \ar[r] & \mathcal{K}_M \ar[r] &  \mathcal{K}_M\otimes\OO(V) \ar[r]  & 
\iota_* \omega_V \ar[r] & 0}
\end{xy}
\end{eqnarray}
where the map $j$ is the natural inclusion. The diagram \eqref{eq:diagram:gro} induces
a natural isomorphism
\begin{eqnarray*}
\iota_*\mathcal K_V\cong 
\big ( \mathcal{K}_M \otimes \OO(V) \otimes \mathcal{J}(V))\big /\mathcal K_M
\cong
\mathcal{J}(V) \otimes \big (\mathcal{K}_M \otimes \OO(V))\big
/\mathcal K_M 
\cong
\mathcal{J}(V)\otimes \iota_* \omega_V.
\end{eqnarray*}
As $\mathcal{J}(V)|_V \subset \OO_V$, this implies particularly that there is a natural
inclusion of the Grauert-Riemenschneider canonical sheaf into Grothendieck's dualizing sheaf, $\mathcal{K}_V \subset \omega_V$.
This proves Theorem ~\ref{thm:gro1}.

\medskip

Let us now prove Theorem ~\ref{thm:gro2}; assume therefore that $V$ is
a normal. 
Then $\omega_V \cong \big(\mathcal{K}_M \otimes \OO(V)\big)|_V$ is an
invertible sheaf corresponding to a canonical Cartier divisor $K_V$,
in particular $V$ is Gorenstein, see e.g. \cite[\S 5.4]{PR}. 
If $\pi: N \rightarrow V$ is any resolution of singularities and $K_N$
is the canonical divisor of $N$,  
we can write
\begin{eqnarray}\label{eq:gro3}
K_N = \pi^* K_V + \sum a_j E_j,
\end{eqnarray}
where the $E_j$ are the irreducible components of the exceptional
divisor and the $a_j$ are rational coefficients. Then $V$ has (at
worst) {\bf
  canonical singularities} if and only if $a_j\geq 0$ for all indices
$j$. 
As \eqref{eq:gro3} is equivalent to $\mathcal{K}_N = \pi^* \omega_V \otimes \OO(\sum a_j E_j)$, 
$V$ has canonical singularities precisely if $\pi^* \omega_V \subset\mathcal{K}_N$. 


\begin{proof}[Proof of Theorem ~\ref{thm:gro2}]
Let $\pi: N\rightarrow V$ be any resolution of singularities.

First assume that $\mathcal{K}_V = \omega_V$. Then, in light of \eqref{babel}, 
$$\pi^* \omega_V = \pi^* \mathcal{K}_V = \pi^*\pi_* \mathcal{K}_N  \subset \mathcal{K}_N,$$
and so $V$ has canonical singularities. 

Conversely, assume that $V$ has canonical
singularities. Then $$\omega_V \subset \pi_* \pi^* \omega_V \subset
\pi_* \mathcal{K}_N = \mathcal{K}_V$$ and thus $\mathcal{K}_V =
\omega_V$, since the inverse inclusion follows
from Theorem ~\ref{thm:gro1}.



\end{proof}

\subsection{The commutative adjunction diagram}\label{ssec:adjunction3}

We will now give an explicit realization of the adjunction map \eqref{eq:flabbyadjunction}
on the level of $(n,q)$-forms. We can achieve that easily by tensoring \eqref{eq:exact2}
with the sheaf of germs of smooth $(0,q)$-forms $\mathcal{C}^\infty_{0,q}$.

Let $M$ be a compact Hermitian manifold, $V$ a singular hypersurface in $M$,
and $\pi: (V',M') \rightarrow (V,M)$ 
an embedded resolution of $V$ in $M$. 
Give $M'$ any positive definite metric.
As in Section ~\ref{ssec:adjunction1}, let $\{f_j'\}_j$ be a finite defining system for $V'$ in $M'$.
We are now in the position to describe an adjunction morphism on $M$ similar to the procedure in Section ~\ref{ssec:adjunction1}.
On $M$, we have to replace the normal bundle $[V]$ by the adjunction sheaf $\OO(V)\otimes\mathcal{J}(V)$.

Consider the adjunction map
$$\Psi': \mathcal{K}_{M'} \otimes \OO(V') \longrightarrow \iota_* \mathcal{K}_{V'}$$
for the non-singular  hypersurface $V'$ in $M'$ defined as in Section
~\ref{ssec:adjunction1}. 
Using Proposition ~\ref{thm:mis1} 
and 
$\pi^*: \mathcal{K}_V \overset{\cong}{\longrightarrow} \pi_*
\mathcal{K}_{V'},$ 
we obtain the first commutative adjunction diagram
\begin{eqnarray}\label{eq:diagram01}
\begin{xy}
  \xymatrix{
  \mathcal{K}_M\otimes\OO(V)\otimes \mathcal{J}(V) \ar[r]^{\ \ \ \ \ \ \ \ \ \ \ \Psi} \ar[d]^{\pi^*}_{\cong} &
 \iota_* \mathcal{K}_V \ar[d]^{\pi^*}_{\cong}\\
 \pi_* \big( \mathcal{K}_{M'} \otimes \OO(V')\big) \ar[r]^{\ \ \ \ \ \ \  \Psi'} &  \pi_* \iota_*\mathcal{K}_{V'} }
\end{xy}
\end{eqnarray}
defining the adjunction map $\Psi$ on $M$. 
Here the vertical maps are induced by pull-back of forms under $\pi$. Note that $\pi_*\iota_* \mathcal{K}_{V'} = \iota_* \pi_* \mathcal{K}_{V'}$
and that the kernel of
$\Psi$ is just the natural inclusion of $\mathcal{K}_M$ in $\mathcal{K}_M\otimes\OO(V)\otimes\mathcal{J}(V)$.
So, $\Psi$
realizes the isomorphism \eqref{eq:adjunction100}, i.e.
$$\Psi: \mathcal{K}_M\otimes\OO(V)\otimes\mathcal{J}(V)\big/ \mathcal{K}_M \overset{\cong}{\longrightarrow} \iota_* \mathcal{K}_V.$$

Considering the maps on global sections, \eqref{eq:diagram01} yields
the commutative diagram
\begin{eqnarray*}
\begin{xy}
  \xymatrix{
  \Gamma\big(M,\mathcal{K}_M\otimes\OO(V)\otimes \mathcal{J}(V)\big) \ar[r]^{\ \ \ \ \ \ \ \ \ \ \ \Psi} \ar[d]^{\pi^*}_{\cong} &
 \Gamma(V,\mathcal{K}_V) \ar[d]^{\pi^*}_{\cong}\\
 \Gamma\big(M',\mathcal{K}_{M'} \otimes \OO(V')\big) \ar[r]^{\ \ \ \ \ \ \ \Psi'} &  \Gamma(V',\mathcal{K}_{V'}) }
\end{xy}
\end{eqnarray*}
We can now describe $\Psi$ explicitly. Let $\omega\in\Gamma(M,\mathcal{K}_M\otimes\OO(V)\otimes\mathcal{J}(V))$.
By use of \eqref{eq:adjunction4}, we get that
\begin{eqnarray}\label{eq:explicit1}
\pi^*\omega = \frac{df_j'}{f_j'} \wedge \Psi'(\pi^*\omega) = \frac{df_j'}{f_j'} \wedge \pi^* \Psi(\omega)
\end{eqnarray}
locally on $V'$.

Let $f$ be a local defining function for $V$ in $M$. Then $\pi^* f$ is vanishing precisely to order $1$
on $V'\setminus E$. Hence
$$\pi^*\left(\frac{df}{f}\right) = \frac{\pi^* df}{\pi^* f} = \frac{df_j'}{f_j'}$$
locally as $(1,0)$-forms in $\OO(V')|_{V'\setminus E}$. Since $\pi$ is a biholomorphism on $M'\setminus E$,
it follows from \eqref{eq:explicit1} that 
\begin{eqnarray}\label{eq:explicit2}
\omega = \frac{df}{f} \wedge \Psi(\omega)
\end{eqnarray}
on $V^*$. This shows that the adjunction map $\Psi$ does not depend on the resolution $\pi: M'\rightarrow M$
since $df$ does not vanish on $V^*$.

\bigskip
Let us now define the adjunction map for (germs of) $(n,q)$-forms.
That can be done simply by tensoring the diagram \eqref{eq:diagram01}
by the sheaves of germs of smooth $(0,q)$-forms, so that we obtain the commutative diagram
\begin{eqnarray}\label{eq:diagram03}
\begin{xy}
  \xymatrix{
  \mathcal{C}^\infty_{0,q} \otimes \big(\mathcal{K}_M\otimes\OO(V)\otimes \mathcal{J}(V)\big) \ar[r]^{\ \ \ \ \ \ \ \ \ \ \ (1,\Psi)} \ar[d]^{(\pi^*,\pi^*)} &
 \mathcal{C}^\infty_{0,q} \otimes \iota_* \mathcal{K}_V \ar[d]^{(\pi^*,\pi^*)}\\
 \pi_* \mathcal{C}^\infty_{0,q} \otimes \pi_* \big( \mathcal{K}_{M'} \otimes \OO(V')\big) \ar[r]^{\ \ \ \ \ (1, \Psi')} &  
\pi_* \mathcal{C}^\infty_{0,q} \otimes \pi_* \iota_*\mathcal{K}_{V'} }.
\end{xy}
\end{eqnarray}
Now the vertical arrows are not isomorphisms any more.

It is easy to see that we can complement the diagram on the right hand side
by natural mappings to the sheaves of germs of $L^2$-forms in the domain of $\dq$ on $V$ and $V'$, respectively:
\begin{eqnarray}\label{eq:diagram04}
\begin{xy}
  \xymatrix{
 \mathcal{C}^\infty_{0,q} \otimes \iota_* \mathcal{K}_V \ar[r] \ar[d]^{(\pi^*,\pi^*)} & 
\iota_* \mathcal{C}^{n-1,q}_V \ar[d]^{\pi^*}\\
\pi_* \mathcal{C}^\infty_{0,q} \otimes \pi_* \iota_*\mathcal{K}_{V'} \ar[r]&
\pi_* \iota_* \mathcal{C}^{n-1,q}_{V'} }
\end{xy}
\end{eqnarray}

Merging \eqref{eq:diagram03}, \eqref{eq:diagram04} and adopting the notation,
we obtain the commutative adjunction diagram for (germs of) $(n,q)$-forms:
\begin{eqnarray*}
\begin{xy}
  \xymatrix{
  \mathcal{C}^\infty_{n,q}\otimes\OO(V)\otimes \mathcal{J}(V) \ar[r]^{\ \ \ \ \ \ \ \ \ \ \ \Psi} \ar[d]^{\pi^*} &
 \iota_* \mathcal{C}^{n-1,q}_V \ar[d]^{\pi^*}\\
 \pi_* \big( \mathcal{C}^\infty_{n,q} \otimes \OO(V')\big) \ar[r]^{\ \ \ \ \ \Psi'} &  
\pi_* \iota_* \mathcal{C}^{n-1,q}_{V'}}
\end{xy}
\end{eqnarray*}
For global sections, we have the commutative diagram
\begin{eqnarray}\label{eq:diagram06}
\begin{xy}
  \xymatrix{
  \Gamma\big(M,\mathcal{C}^\infty_{n,q}\otimes\OO(V)\otimes \mathcal{J}(V)\big) \ar[r]^{\ \ \ \ \ \ \ \ \ \ \ \Psi} \ar[d]^{\pi^*} &
 \Gamma(V,\mathcal{C}^{n-1,q}_V) \ar[d]^{\pi^*}\\
 \Gamma\big(M', \mathcal{C}^\infty_{n,q} \otimes \OO(V')\big) \ar[r]^{\ \ \ \ \ \Psi'} &  
\Gamma(V',\mathcal{C}^{n-1,q}_{V'})}
\end{xy}
\end{eqnarray}

Let $\omega\in\Gamma(M,\mathcal{C}^\infty_{n,q}\otimes\OO(V)\otimes\mathcal{J}(V))$.
It follows from \eqref{eq:explicit2} that we still have
\begin{eqnarray*}
\omega = \frac{df}{f} \wedge \Psi(\omega)
\end{eqnarray*}
locally on $V^*$. 

Since $\dq$ commutes with $\pi^*$ and $\Psi'$, it must also commute with $\Psi$,
and so we deduce from \eqref{eq:diagram06} the commutative diagram on the level of $\dq$-cohomology:
\begin{eqnarray}\label{eq:diagram07}
\begin{xy}
  \xymatrix{
H^q\big(\Gamma(M,\mathcal{C}^\infty_{n,*} \otimes \OO(V)\otimes \mathcal{J}(V))\big) \ar[r]^{\ \ \ \ \ \ \ \ \Psi_V} \ar[d]^{\pi^*} &
 H^q\big(\Gamma(V,\mathcal{C}^{n-1,*}_V)\big)
\ar[d]^{\pi^*}\\
H^q\big(\Gamma(M',\mathcal{C}^\infty_{n,*}\otimes \OO(V'))\big) \ar[r]^{\ \ \ \ \ \Psi_{V'}} &
H^q(\Gamma(V',\mathcal{C}^{n-1,*}_{V'})\big)}
\end{xy}
\end{eqnarray}
Note that the groups on the right hand-side are by definition the $L^2$-cohomology groups
for the $\dq$-operator in the sense of distributions.

It does not cause any additional difficulty to define the adjunction morphisms as above
also for $(n,q)$-forms with values in a Hermitian line bundle $F\rightarrow M$
and in $\pi^* F \rightarrow M'$, respectively. Using \eqref{jeanthm}
we get:

\begin{thm}\label{thm:adjunction1}
Let $M$ be a compact Hermitian manifold of dimension $n$, $V\subset M$ a singular hypersurface in $M$,
and $F\rightarrow M$ a Hermitian line bundle.
Let $\pi: (V',M') \rightarrow (V,M)$
be an embedded resolution of singularities,
and let $\mathcal{J}(V)$ be the multiplier ideal sheaf as defined in
Theorem ~\ref{thm:mis2}. 
Then there exists a commutative diagram
\begin{eqnarray}\label{eq:diagram08}
\begin{xy}
  \xymatrix{
H^q\big(\Gamma(M,\mathcal{C}^\infty_{n,*}(F) \otimes \OO(V)\otimes \mathcal{J}(V))\big) \ar[r]^{\ \ \ \ \ \ \ \ \ \ \ \ \Psi_V} \ar[d]^{\pi^*} &
 H^{n-1,q}_{(2)}(V^*,F)
\ar[d]^{\pi^*}_{\cong}\\
H^q\big(\Gamma(M',\mathcal{C}^\infty_{n,*}(\pi^*F)\otimes \OO(V')\big) \ar[r]^{\ \ \ \ \ \ \ \ \ \Psi_{V'}} &
H^{n-1,q}_{(2)}(V',\pi^*F), }
\end{xy}
\end{eqnarray}
where $\Psi_{V'}$ is the usual adjunction map for the smooth divisor $V'$ in $M'$,
$\Psi_V$ is the adjunction map for the non-smooth divisor $V$ in $M$
as defined in \eqref{eq:diagram07}, and the vertical maps $\pi^*$ in
\eqref{eq:diagram08} are induced by pull-back of forms under $\pi$. 
\end{thm}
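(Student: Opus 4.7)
The plan is to extend the sheaf-level constructions of Section \ref{ssec:adjunction3} (which yielded the untwisted commutative diagram \eqref{eq:diagram07}) to forms with values in $F$ and $\pi^*F$, and then to upgrade the right-hand vertical map to an isomorphism by appealing to \eqref{jeanthm}.

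First I would tensor the sheaf-level adjunction diagram \eqref{eq:diagram01} with the smooth Hermitian line bundle $F$ upstairs and $\pi^*F$ downstairs. Since $F$ is smooth, a local trivialization reduces the construction of $\Psi$ and the isomorphism $\pi^*$ of Proposition \ref{thm:mis1} to the untwisted case already treated; the $F$-twisted commutative square and the local identity $\omega = (df/f)\wedge\Psi(\omega)$ on $V^*$ from \eqref{eq:explicit2} follow immediately. Tensoring further with $\mathcal{C}^\infty_{0,q}$ and composing with the natural maps into $\iota_*\mathcal{C}^{n-1,q}_V(F)$ and $\pi_*\iota_*\mathcal{C}^{n-1,q}_{V'}(\pi^*F)$ then produces the $F$-twisted analog of diagram \eqref{eq:diagram06}.

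Next, because $\dq$ commutes with pull-back under $\pi$ and with both adjunction morphisms $\Psi$, $\Psi'$ (by their explicit local description in Section \ref{ssec:adjunction3}), passing to global $\dq$-cohomology of sections on $M$ and $M'$ yields the commutative square of \eqref{eq:diagram08}. At this point the only remaining nontrivial statement is that
$$\pi^*: H^{n-1,q}_{(2)}(V^*,F) \longrightarrow H^{n-1,q}_{(2)}(V',\pi^*F)$$
is an isomorphism.

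This is the main (and only substantive) obstacle, and I would handle it by invoking \eqref{jeanthm} applied to the resolution $\pi: V' \to V$ of the pure $(n{-}1)$-dimensional Hermitian complex space $V$, with the Hermitian line bundle $G := F|_V$. To do so one must verify that $\pi^*G$ is locally semi-positive relative to $V$. This is a standard local computation: on a small neighborhood $U\subset V$, realized as a subvariety of some $\mathbb{C}^N$ via a local embedding, one replaces the smooth local weight $\phi$ for the metric on $F|_U$ by $\phi + C\|z\|^2$ for $C\gg 1$, obtaining a strictly plurisubharmonic weight on $U$ whose pull-back to $\pi^{-1}(U)\subset V'$ yields a semi-positively curved weight on $\pi^*F|_{\pi^{-1}(U)}$. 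The bounded correction $C\|z\|^2$ does not affect membership in $L^2$, so $L^2$-cohomology is unaltered. With local semi-positivity verified, \eqref{jeanthm} supplies the required isomorphism and completes the proof.
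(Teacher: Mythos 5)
Your proposal is correct and follows essentially the same route as the paper: the commutative square is obtained by twisting the sheaf-level diagrams of Section \ref{ssec:adjunction3} with $F$ and $\pi^*F$ and passing to $\dq$-cohomology, and the right-hand vertical isomorphism is exactly the application of \eqref{jeanthm} to the resolution $\pi\colon V'\to V$ with $G=F|_V$. Your explicit verification of the local semi-positivity hypothesis (adding $C\|z\|^2$ to the weight, which does not change the $L^2$-classes) is a detail the paper leaves implicit, but it is the right justification.
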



Note that the lower line in \eqref{eq:diagram08} can be understood as
\begin{eqnarray*}
\Psi_{V'}: H^{n,q}(M',\pi^* F\otimes [V']) \longrightarrow H^{n-1,q}_{(2)}(V',\pi^* F),
\end{eqnarray*}
where $[V'] \rightarrow M'$ is the normal bundle of $V'$ in $M'$,
see Section ~\ref{ssec:adjunction1}.

Our purpose is to determine conditions under which the adjunction map $\Psi_V$ in the upper line of the commutative diagram \eqref{eq:diagram08}
is surjective. 
For this, it would be interesting to know whether 
the vertical map on the left-hand side of the diagram \eqref{eq:diagram08} is also an isomorphism.
We will see later in Section ~\ref{sec:l2extension} that this is actually true if we replace the upper left corner of the diagram
by a certain $L^2$-cohomology group. In, particular it follows that
vertical map on the left-hand side is surjective, see Corollary
~\ref{cor:surjective}.

\medskip 

Before studying $L^2$-cohomology in the next section, 
let us first investigate the case of $C^\infty$-cohomology on the left
hand side of \eqref{eq:diagram08} a bit closer.

It is well known (just solve the $\dq$-equation locally in the $C^\infty$-category for forms with values
in a line bundle) that the complex
\begin{eqnarray*}
\big( \mathcal{C}^\infty_{n,*}(\pi^* F\otimes [V']),\dq\big)
\end{eqnarray*}
is a fine resolution of $\mathcal{K}_{M'}(\pi^* F\otimes [V'])$. 
On the other hand, we have already seen that pull-back of forms under $\pi$ induces the isomorphism
\begin{equation}\label{bluebird}
\pi^*: \mathcal{K}_M(F)\otimes\mathcal{O}(V)\otimes\mathcal{J}(V) 
\overset{\cong}{\longrightarrow} \pi_* \big( \mathcal{K}_{M'}(\pi^* F) \otimes \OO(V')\big),
\end{equation}
since the line-bundle $F\rightarrow M$ is added easily to the statement of Proposition ~\ref{thm:mis1}.

\begin{lem}\label{lemma:positive}
If $U\subset M$ is sufficiently small, then $\big(\pi^*F\otimes [V']\big)|_{\pi^{-1}(U)}$ is semi-positive.
\end{lem}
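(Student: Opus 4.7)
My plan is to split into the two cases $x \in M \setminus V$ and $x \in V$, and in each construct a suitable smooth Hermitian metric on $\pi^*F \otimes [V']$ over $\pi^{-1}(U)$.

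For $x \notin V$, I would choose $U$ so small that $V \cap U = \emptyset$. Then $\pi$ restricts to a biholomorphism $\pi^{-1}(U) \to U$ and $V' \cap \pi^{-1}(U) = \emptyset$, so $(\pi^*F \otimes [V'])|_{\pi^{-1}(U)} \cong F|_U$. Multiplying the given smooth metric $e^{-2\phi}$ on $F|_U$ by $e^{-2C|z|^2}$ for $C$ sufficiently large then produces a smooth metric with semi-positive curvature.

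For the main case $x \in V$, I would shrink $U$ so that $F|_U$ is holomorphically trivial with the smooth metric $e^{-2\phi}$ and $V \cap U = (f)$ for a single defining function $f$. The divisor identity $(\pi^*f) = V' + E_f$ on $\pi^{-1}(U)$ then provides a trivialization of $[V' + E_f]|_{\pi^{-1}(U)}$ via $\pi^*f$, and hence an isomorphism $[V']|_{\pi^{-1}(U)} \cong [-E_f]|_{\pi^{-1}(U)}$. The next step is to construct a smooth Hermitian metric $e^{-2\psi'}$ on $[V']|_{\pi^{-1}(U)}$ whose restriction to each irreducible component $E_j$ of the exceptional divisor lying in $\pi^{-1}(U)$ has semi-positive curvature, exploiting that the SNC condition forces $V' \cdot E_j \geq 0$, so $[V']|_{E_j}$ is an effective divisor class on the smooth $E_j$.

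Once $\psi'$ is in hand, I combine everything by taking the total weight $\pi^*\phi + \psi' + C\pi^*\rho$ on $\pi^*F \otimes [V']$, with $\rho = |z|^2$ strictly plurisubharmonic on $U$ and $C$ large enough that $C\pi^*(i\partial\dq\rho)$ dominates the bounded form $i\partial\dq(\pi^*\phi + \psi')$ in directions transverse to the exceptional fibers of $\pi$. Along the fibers $\pi^*\rho$ and $\pi^*\phi$ are locally constant, so their curvature contributions vanish, and the fiber-direction curvature reduces to $i\partial\dq\psi'|_{\mathrm{fiber}}$, which is semi-positive by construction.

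The hard part will be the construction of $\psi'$: one needs a single globally smooth metric on $[V']|_{\pi^{-1}(U)}$ whose restriction to every irreducible exceptional component is simultaneously semi-positive. For one-dimensional exceptional components this is straightforward via Fubini--Study-type metrics on the $\mathbb{P}^1$-fibers, but in higher dimensions it should rely on the effectivity of $V' \cap E_j$ on $E_j$ together with Demailly's regularization of the canonical singular metric, patched by a partition of unity on a finite open cover of the relatively compact set $\pi^{-1}(\overline U) \subset M'$.
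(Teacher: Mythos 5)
Your reduction to the case $x\in V$ with $F|_U$ trivial and $[V']|_{\pi^{-1}(U)}\cong[-E_f]|_{\pi^{-1}(U)}$ matches the paper's first step, but the remainder of your argument has two genuine gaps. First, the construction of $\psi'$ rests on the implication ``$V'\cap E_j$ is an effective divisor on $E_j$, hence $[V']|_{E_j}$ is semi-positive.'' That implication is false in general: an effective divisor class need not be nef (think of a $(-1)$-curve $C$ on a surface, where $[C]|_C\cong\OO(-1)$). Effectivity of the restriction gives you nothing toward a semi-positively curved metric on $E_j$, so the key object $\psi'$ is not actually produced. (The restrictions $[V']|_{E_j}$ \emph{are} in fact nef here, but for a reason you have not identified -- see below.) Second, even granting such a $\psi'$, the final patching $\pi^*\phi+\psi'+C\pi^*\rho$ does not work. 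The form $C\,i\partial\dq(\pi^*\rho)=C\,\pi^*(i\partial\dq\rho)$ vanishes identically on $\ker d\pi$, including all mixed terms, so for $\xi=a+b$ with $a\in\ker d\pi$ one gets $i\partial\dq\psi'(a,\bar a)+2\Re\, i\partial\dq\psi'(a,\bar b)+O(|b|^2)+C|d\pi(b)|^2$. Since $i\partial\dq\psi'(a,\bar a)$ is only assumed $\geq 0$ and may vanish for some $a\neq 0$ while the cross term $i\partial\dq\psi'(a,\bar b)$ does not, this expression is negative for suitable small $b$, no matter how large $C$ is. ``Semi-positive along the fibers plus very positive transversally'' does not imply semi-positive; the cross terms can only be absorbed via Cauchy--Schwarz once you already know the form is semi-definite, which is what you are trying to prove. (The appeal to Demailly regularization is also problematic -- regularizing a psh weight costs positivity -- but it is not needed: a continuous psh weight suffices for the notion of semi-positivity used here.)

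The paper's proof avoids all of this by producing one global weight on $\pi^{-1}(U)$ at once. Recall from Section~\ref{ssec:adjunction2} that, after shrinking $U$, there are $g_1,\ldots,g_k\in\OO(U)$ generating $\pi_*\OO(-E_f)$, so that $E_f=(\pi^*g_1,\ldots,\pi^*g_k)$. Writing $\pi^*f=f_\alpha^0 f'_\alpha$ locally, the quotients $h_{j\alpha}=\pi^*g_j/f'_\alpha$ are holomorphic, transform as sections of $[-E_f]$, and have \emph{no common zero}; in other words $[-E_f]|_{\pi^{-1}(U)}$ is generated by the global sections coming from the $\pi^*g_j$. Hence $e^{-2\log|h_\alpha|}$, with $|h_\alpha|=(\sum_j|h_{j\alpha}|^2)^{1/2}$, is a (continuous) metric on $[-E_f]$ with plurisubharmonic local weights, which is exactly the required semi-positivity -- no component-by-component analysis, no partition of unity, and no added $C\pi^*\rho$. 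If you want to rescue your approach, the fact you should be exploiting is precisely this global generation of $\OO(-E_f)$ over $\pi^{-1}(U)$ by pullbacks of functions from $U$, not the effectivity of $V'\cap E_j$.
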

\begin{proof}
The factor $\pi^*F$ is irrelevant since the statement is local with respect to $M$ and semi-positivity is stable under pullback by holomorphic mappings. Let $f\in \OO(U)$ be a defining function for $V|_U$ so that

\begin{equation*}
(\pi^*f)=V'|_{\pi^{-1}(U)} + E_f|_{\pi^{-1}(U)},
\end{equation*}
where $E_f$ is a divisor with support on the exceptional set of the resolution.
Then 
\begin{equation*}
[V']|_{\pi^{-1}(U)} = \pi^*[V] \otimes [-E_f]|_{\pi^{-1}(U)}
\end{equation*} 
and so, by the argument above, it is sufficient
to see that $[-E_f]|_{\pi^{-1}(U)}$ is semi-positive.

Take a covering $\{U_{\alpha}\}$ of $\pi^{-1}(U)$ such
that $\pi^*{f}=f_{\alpha}^0\cdot f'_{\alpha}$ in $U_{\alpha}$, where $(f_{\alpha}^0)=V'|_{U_{\alpha}}$
and $(f'_{\alpha})=E_f|_{U_{\alpha}}$. Recall from Section~\ref{ssec:adjunction2} that (possibly after shrinking $U$) there are 
$g_1,\ldots,g_k\in \OO(U)$ that generate the direct image of $\OO(-E_f)$. It follows that there
is a non-vanishing tuple $h_{\alpha}=(h_{1\alpha},\ldots,h_{k\alpha})\in \OO(U_{\alpha})^k$
such that 
\begin{equation*}
\pi^* g=(\pi^* g_1,\ldots,\pi^* g_k)= f'_{\alpha} h_{\alpha} \quad \textrm{in} \quad U_{\alpha}.
\end{equation*} 
Letting $|h_{\alpha}|=(|h_{1\alpha}|^2+\cdots + |h_{k\alpha}|^2)^{1/2}$, it is straight forward to check that
the local functions $e^{-2\log|h_{\alpha}|}$ transform as a metric on $[-E_f]$. Since $\log|h_{\alpha}|$ is 
plurisubharmonic, $[-E_f]|_{\pi^{-1}(U)}$ is semi-positive. 
\end{proof}

Thus, $\pi^* F \otimes [V']$ is locally semi-positive with respect to $M$. 
Since $\mathcal{K}_{M'}(\pi^*F \otimes [V'])\cong \mathcal{K}_{M'}(\pi^*F)\otimes \OO(V')$
we conclude by Takegoshi's vanishing theorem \eqref{taketake}:
\begin{equation}\label{dino} 
R^q\pi_* \big(\mathcal{K}_{M'}(\pi^* F) \otimes \OO(V')\big)=0
\ \ \ \ \mbox{ for } \ \ q>0.
\end{equation} 
So, the direct image complex
\begin{eqnarray*}
\left(\pi_* \big(\mathcal{C}^\infty_{n,*}(\pi^* F) \otimes\OO(V')\big), \pi_* \dq\right)
\end{eqnarray*}
is a fine resolution of $\pi_* \big( \mathcal{K}_{M'}(\pi^* F) \otimes
\OO(V')\big)$
%
Combining with \eqref{bluebird} and taking global cohomology 
we would get that the vertical map on the left-hand side of \eqref{eq:diagram08}
is an isomorphism if we knew that the complex
$$\big(\mathcal{C}^\infty_{n,*}(F)\otimes \OO(V)\otimes\mathcal{J}(V),\dq\big)$$
were exact.
But this is a delicate problem since it involves the multiplier ideal
sheaf $\mathcal{J}(V)$. We can prove the required local
exactness only in the $L^2$-category, see Section
~\ref{sec:l2extension}.


\section{Extension of cohomology classes}
\label{sec:l2extension}

\subsection{Smooth metrics on the adjunction sheaf  $\OO(V)\otimes\mathcal{J}(V)$}
\label{ssec:metrics}

Given a line bundle $F\rightarrow M$ we will use the notation
$e^{-2\varphi}$ for the Hermitian metric on $F$ with weight
$\varphi$. When $e^{-2\varphi}$ is a singular Hermitian metric, i.e.
the weight $\varphi$ is in $L^{1,loc}$ rather than smooth,
cf. \cite[Definition~11.20]{De2}, we will sometimes write $F^{sing}$. 
The following result asserts that smooth metrics on adjunction
sheaves, as defined in Definition ~\ref{defn:smooth0} do
actually exist. 

\begin{thm}\label{thm:smooth}
Let
$$\pi: (V',M') \rightarrow (V,M)$$
be any embedded resolution of singularities of $V$ in $M$ with only normal crossings,
and $e^{-2\psi'}$ a smooth Hermitian metric on the normal bundle $[V']$ of $V'$ in $M'$.
Then $e^{-2\psi'}$ induces a singular metric $e^{-2\psi}$ on $[V]$ that is smooth
on the adjunction sheaf $\OO(V)\otimes\mathcal{J}(V)$.
Moreover, if $[V']|_{\pi^{-1}(U)}$ is semi-positive, then $[V]^{sing}|_{U}$ is too. 
\end{thm}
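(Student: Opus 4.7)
The plan is to define $\psi$ essentially by the formula $\pi^{*}\psi=\psi'+\varphi_{\Delta}$, which is forced by Definition~\ref{defn:smooth0}, and then to verify the two claims. Since $\pi^{*}[V]=[V']\otimes[\Delta]$ as line bundles on $M'$ and $\pi$ restricts to a biholomorphism $M'\setminus E\to M\setminus\Sing V$, this identity uniquely specifies the weight $\psi$ of a prospective singular metric on $[V]$ over $M\setminus\Sing V$. The two tasks are therefore: extend $\psi$ across $\Sing V$ as an $L^{1,loc}$ function (so it defines a singular metric), and show that plurisubharmonicity survives this extension in the semi-positive case.

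For the $L^{1,loc}$ extension, I would work in local trivializations of $[V]$ and apply biholomorphic change of variables through $\pi$: for any compact $K\subset M$,
$$\int_{K}|\psi|\,dV_{M}=\int_{\pi^{-1}(K)\setminus E}|\psi'+\varphi_{\Delta}|\,|\Jac\pi|^{2}\,dV_{M'}\leq C\int_{\pi^{-1}(K)}|\psi'+\varphi_{\Delta}|\,dV_{M'}<\infty,$$
since $\psi'$ is smooth and $\varphi_{\Delta}=\log|s_{\Delta}|$ is $L^{1,loc}$ on $M'$ as the logarithm of the modulus of a holomorphic section defining the normal crossing divisor $\Delta$. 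Hence $e^{-2\psi}$ is a well-defined singular metric on $[V]$. The identity $\pi^{*}\psi-\varphi_{\Delta}=\psi'$ then holds on $M'\setminus E$ and, by $L^{1,loc}$-uniqueness, almost everywhere on $M'$, so $e^{-2\pi^{*}\psi+2\varphi_{\Delta}}=e^{-2\psi'}$ is a smooth metric on $[V']$. By Definition~\ref{defn:smooth0} this is exactly what is needed for $e^{-2\psi}$ to be smooth on the adjunction sheaf.

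For the semi-positivity assertion, assume $i\partial\dq\psi'\geq 0$ on $\pi^{-1}(U)$. Since $\varphi_{\Delta}$ is plurisubharmonic, the sum $\pi^{*}\psi=\psi'+\varphi_{\Delta}$ is psh on $\pi^{-1}(U)$ and, because $\psi'$ is smooth and $\log|s_{\Delta}|$ is upper semi-continuous with local upper bounds in any trivialization, locally bounded above there. Pushing down via the biholomorphism $M'\setminus E\to M\setminus\Sing V$ and using the properness of $\pi$, $\psi$ is psh on $U\setminus\Sing V$ and locally bounded above near $\Sing V$. The classical extension theorem for plurisubharmonic functions across analytic subsets (Grauert--Remmert type) then provides a unique psh extension to all of $U$, so $i\partial\dq\psi\geq 0$ as a current on $U$ and $[V]^{sing}|_{U}$ is semi-positive.

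The main obstacle is the last extension step, since $\pi$ fails to be injective above $\Sing V$ and the naive formula $\psi=(\psi'+\varphi_{\Delta})\circ\pi^{-1}$ is only meaningful on $M\setminus\Sing V$. Both the $L^{1,loc}$ extension (via a Jacobian estimate) and the psh extension (via the Grauert--Remmert extension theorem) hinge on local upper boundedness of the summands on $M'$, which is the decisive ingredient allowing the construction to descend from the resolution to $M$.
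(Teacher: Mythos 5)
Your construction of the metric is the same as the paper's: both define $\psi$ by forcing $\pi^{*}\psi=\psi'+\varphi_{\Delta}$ (equivalently, $|s|_V^2:=|\pi^* s|^2 e^{-2(\psi'+\varphi_{E_f})}$ in local trivializations, using $\pi^*[V]=[V']\otimes[E_f]$), after which smoothness on $\OO(V)\otimes\mathcal{J}(V)$ is immediate from Definition~\ref{defn:smooth0}. Where you genuinely diverge is the semi-positivity step. The paper argues at the level of currents: since $\pi_*\pi^*=\id$ on $L^{1,loc}$ functions, $dd^c\psi=\pi_*\bigl(dd^c\psi'+T\bigr)$ with $T$ the integration current on $|E_f|$, and $\pi_*T=0$ because it is a normal $(1,1)$-current supported on $\Sing V$, which has codimension $\geq 2$ in $M$ (the support theorem for normal currents). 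You instead observe that $\psi'+\varphi_{\Delta}$ is psh upstairs, push it down to a psh function on $U\setminus\Sing V$ that is locally bounded above near $\Sing V$ (properness of $\pi$ plus local upper bounds on both summands), and invoke the Grauert--Remmert extension theorem for psh functions across analytic sets; since the psh extension agrees a.e.\ with the $L^{1,loc}$ extension, $i\partial\dq\psi\geq 0$ as a current on $U$. Both arguments are correct. Yours is more elementary and has the virtue of making the $L^{1,loc}$ verification explicit (the paper leaves it implicit in the phrase ``singular metric''); the paper's gives the slightly sharper identity $dd^c\psi=\pi_*dd^c\psi'$ of currents rather than just the inequality, and avoids having to reconcile the psh extension with the a.e.-defined weight. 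One small point worth spelling out in your version: the identity $\pi^{*}\psi=\psi'+\varphi_{\Delta}$ is an identity of weights relative to compatible trivializations, so one should check (as the paper does via the observation that $\pi^*s/f'_{\alpha}$ transforms as a section of $[V']$) that the locally defined expressions patch to a metric on $[V]$ over all of $M$; this is routine since $\psi'$ and $\varphi_{\Delta}$ are globally defined data on $M'$.
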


\begin{proof}
It is sufficient to prove the theorem locally in $M$ so let $U\subset M$ such that $V|_U$ 
is defined by $f\in \OO(U)$.
Then $\pi^* V= (\pi^* f) = V'|_{\pi^{-1}(U)} + E_f|_{\pi^{-1}(U)}$,
where $V'$ is the strict transform of $V$, and $E_f$ is an effective divisor with support on the exceptional set $E$;
cf.\ the proof of Lemma~\ref{lemma:positive}.
Take a covering $\{U_{\alpha}\}$ of $\pi^{-1}(U)$ such that
$\pi^*f=f_{\alpha}^0 f_{\alpha}'$ in $U_{\alpha}$ where $f_{\alpha}^0$ defines $V'|_{U_{\alpha}}$ 
and $f_{\alpha}'$ defines $E_f|_{U_{\alpha}}$.
Now, if $s$ is any section of $[V]$ over $U$ then $\pi^*s/f_{\alpha}'$ transforms
as the $f_{\alpha}^0$ and hence defines a semi-meromorphic section of $[V']$ over $U_{\alpha}$. 
We define the singular metric on $[V]$ by letting
\begin{equation}\label{eq:metrik}
|s|_V^2:= |\pi^*s|^2/|f_{\alpha}'|^2e^{-2\psi'_{\alpha}}=
|\pi^*s|^2  e^{-2(\psi'_{\alpha}+\varphi_{E_f})},
\end{equation}
where $e^{-2\psi'_{\alpha}}$ are the local functions for the metric on $[V']$.
Clearly, this singular metric on $[V]$ induces the original metric $e^{-2\psi'}$ on $[V']$ and hence,
the singular metric on $[V]$ is smooth on the adjunction sheaf $\OO(V)\otimes \mathcal{J}(V)$.

Assume now that $[V']|_{\pi^{-1}(U)}$ is semi-positive, i.e., that $dd^c \psi'_{\alpha}\geq 0$;
notice that by Lemma~\ref{lemma:positive} this can always
be achieved if $U$ is small enough. By \eqref{eq:metrik}, the singular metric $e^{-2\psi}$ on $[V]$ has the property
that $\pi^*\psi=\psi'_{\alpha}+\log|f'_{\alpha}|$ and so, in the sense of currents,
\begin{equation*}
dd^c \psi = \pi_* (dd^c \psi'_{\alpha} + T)=\pi_* dd^c \psi'_{\alpha},
\end{equation*}
where $T$ is the current of integration on $|E_f|$; the last equality follows since $\pi_*T$ is a normal
$(1,1)$-current with support on $\textrm{Sing}\, V$, which has codimension $\geq 2$.
Hence, $[V]^{sing}|_U$ is semi-positive.
\end{proof}

Next, we will prove Theorem ~\ref{thm:resolution0}. 
Let $e^{-2\psi}$ be the induced metric on $[V]$ from Theorem~\ref{thm:smooth}.
With the notation from the proof above, we then have that $\pi^*e^{-2\psi} \sim |f_{\alpha}'|^{-2}$.
Recall from Section ~\ref{ssec:adjunction2} that we can choose holomorphic functions $g_1, ..., g_k\in\OO(U)$
such that $g_1, ...,g_k$ generate the direct image of $\OO(-E_f)$ over $U$,
where $U$ is a small open set in $M$,
i.e. $E_f$ is precisely the common zero set of $\pi^* g_1$, ..., $\pi^* g_k$ (counted with multiplicities).
Hence, $|f_{\alpha}'|\sim |\pi^* g_1|+\cdots + |\pi^* g_k|$,
and so
\begin{equation}\label{eq:psi10}
e^{-2\psi} \sim \big( |g_1|+ ... + |g_k|\big)^{-2}=e^{-2\varphi},
\end{equation}
where $\varphi$ is a local defining function for the multiplier ideal sheaf $\mathcal{J}(V)$.


\begin{rem}\label{ifon} 
Let $e^{-2\psi}$ and $e^{-2\psi'}$ be smooth metrics on $\mathcal
O(V)\otimes \mathcal J (V)$. Then note, in light of \eqref{eq:psi10},
that being $L^2$ with respect to $e^{-2\psi}$ is equivalent to being
$L^2$ with respect to $e^{-2\psi'}$. 
\end{rem}


\begin{proof}[Proof of Theorem \ref{thm:resolution0}] 
By Definition ~\ref{defn:smooth0}, there exists an embedded resolution 
$\pi: (V',M')\rightarrow (V,M)$ 
of $V$ in $M$ such that $\pi^* e^{-2\psi}$ induces a smooth metric on the normal bundle $[V']$
of $V'$ in $M'$. But then, by \eqref{eq:psi10}, $\psi\sim\varphi$
where $\varphi$ is a local defining function for the multiplier ideal sheaf $\mathcal{J}(V)$.
Hence, a holomorphic section $h$ of $[V]$ is in $\mathcal{C}^{0,0}_M([V]^{sing})$ precisely if
$|h|^2 e^{-2\varphi}$ is locally integrable (in a trivialization of
$[V]$). This proves \eqref{waffel}.

\smallskip 

It follows from \eqref{waffel} that 
sections of $\mathcal{K}_M\otimes\OO(V)\otimes\mathcal{J}(V)$
can be identified with square-integrable holomorphic sections of $[V]^{sing}$, i.e.
\begin{eqnarray*}
\mathcal{K}_M \otimes \OO(V) \otimes \mathcal{J}(V) \cong \ker \dq_w \subset \mathcal{C}^{n,0}_M([V]^{sing}).
\end{eqnarray*}
By Remark ~\ref{ifon} we may assume that
$e^{-2\psi}$ is locally semi-positive.
Then  
exactness of the complex $\big(\mathcal{C}^{n,*}_M([V]^{sing}),\dq_w\big)$
is equivalent to local $L^2$-exactness of the $\dq$-equation for $(n,q)$-forms with values in a
holomorphic line bundle with a singular Hermitian metric which is
positive semi-definite. 
But this is well-known, see, e.g., \cite[Corollary~14.3]{De2}, and the proof of the Nadel vanishing theorem,
\cite[Theorem~15.8]{De2}. 
It is furthermore clear that the sheaves $\mathcal{C}^{n,q}_M([V]^{sing})$ admit a smooth partition of unity,
so that $\big(\mathcal{C}^{n,*}_M([V]^{sing}),\dq_w\big)$ is in fact a fine resolution of $\mathcal{K}_M \otimes \OO(V) \otimes \mathcal{J}(V)$.

\end{proof}

\subsection{The adjunction diagram for $L^2$-cohomology
  classes}\label{adjsection}

Let $[V]^{sing}$ be the normal bundle of a hypersurface $V$ with a singular Hermitian metric, smooth on $\OO(V)\otimes\mathcal{J}(V)$.
In order to define the adjunction map for $L^2$-cohomology classes with values in $[V]^{sing}$
we need the following lemma. This is necessary as $L^2$-forms do not behave well
under restriction to lower-dimensional subspaces.

\begin{lem}\label{lem:representative}
Each $L^2$-cohomology class $[\phi]\in H^{n,q}_{(2)}(M,[V]^{sing})$ has a smooth representative
$\phi \in\Gamma\big(M,\mathcal{C}^\infty_{n,q}\otimes \OO(V) \otimes\mathcal{J}(V)\big)$.
\end{lem}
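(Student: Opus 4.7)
My plan is to produce the required smooth representative by running the standard \v{C}ech-to-Dolbeault construction on a \v{C}ech cocycle representing $[\phi]$. The key observation will be that this construction uses only smooth partitions of unity and the $\dq$ operator, both of which preserve the subsheaf $\mathcal{A}^r:=\mathcal{C}^\infty_{n,r}\otimes \OO(V)\otimes \mathcal{J}(V)$ of the $L^2$ sheaf $\mathcal{C}^{n,r}_M([V]^{sing})$, and hence produces a smooth $(n,q)$-form with values in $\OO(V)\otimes \mathcal{J}(V)$ without requiring local exactness of $(\mathcal{A}^*,\dq)$ in positive degrees --- which is precisely the delicacy noted at the end of Section~\ref{ssec:adjunction3}.

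First I would invoke Theorem~\ref{thm:resolution0}: the $L^2$ complex $(\mathcal{C}^{n,*}_M([V]^{sing}),\dq_w)$ is a fine resolution of the coherent sheaf $\mathcal{G}:=\mathcal{K}_M\otimes \OO(V)\otimes \mathcal{J}(V)$, whence $H^{n,q}_{(2)}(M,[V]^{sing})\cong H^q(M,\mathcal{G})$. Fixing a Stein open cover $\mathcal{U}=\{U_i\}$ of the compact manifold $M$, Leray's theorem (together with Cartan's Theorem~B, giving acyclicity of the coherent sheaf $\mathcal{G}$ on Stein opens) yields $H^q(M,\mathcal{G})\cong \check{H}^q(\mathcal{U},\mathcal{G})$, so the class $[\phi]$ corresponds to a \v{C}ech $q$-cocycle $\eta=\{\eta_{i_0\dots i_q}\}$ with values in $\mathcal{G}$.

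Next, pick a smooth partition of unity $\{\chi_i\}$ subordinate to $\mathcal{U}$ and define the homotopy operator $h$ by $(hf)_{i_0\dots i_{p-1}}=\sum_j\chi_j f_{ji_0\dots i_{p-1}}$. The classical iterative descent is then: set $\sigma_0:=h\eta\in \check{C}^{q-1}(\mathcal{U},\mathcal{A}^0)$, which satisfies $\delta\sigma_0=\eta$ since $\delta\eta=0$; put $\omega_1:=\dq\sigma_0\in \check{C}^{q-1}(\mathcal{U},\mathcal{A}^1)$, which is a $\delta$-cocycle because $\delta\omega_1=\dq\delta\sigma_0=\dq\eta=0$ (using $\eta\in \mathcal{G}=\ker\dq\subset \mathcal{A}^0$); set $\sigma_1:=h\omega_1\in \check{C}^{q-2}(\mathcal{U},\mathcal{A}^1)$; iterate $q$ times to produce $\tilde\phi:=\omega_q\in \Gamma(M,\mathcal{A}^q)$ which is $\dq$-closed. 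Every step stays inside $\mathcal{A}^*$ because $\dq$ maps $\mathcal{A}^r\to \mathcal{A}^{r+1}$ and $h$ amounts to multiplication by the smooth $\chi_j$, which preserves each $\mathcal{A}^r$.

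Finally I need to confirm that $[\tilde\phi]=[\phi]$ in $L^2$-cohomology. Smooth $(n,r)$-forms with values in $\OO(V)\otimes \mathcal{J}(V)$ are locally $L^2$ with respect to $e^{-2\psi}$ (in view of \eqref{eq:psi10} and Remark~\ref{ifon}), so there is a natural inclusion of complexes $\mathcal{A}^*\hookrightarrow \mathcal{C}^{n,*}_M([V]^{sing})$. Viewed inside this ambient $L^2$ complex, the above construction is precisely the standard \v{C}ech-to-Dolbeault isomorphism associated to the fine resolution $\mathcal{C}^{n,*}_M([V]^{sing})$ of $\mathcal{G}$, so $\tilde\phi$ represents the image of $\eta$ under $\check{H}^q(\mathcal{U},\mathcal{G})\cong H^{n,q}_{(2)}(M,[V]^{sing})$, namely $[\phi]$. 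The main obstacle I anticipate is not analytic but conceptual: arranging the bookkeeping so that no appeal is ever made to local exactness of $(\mathcal{A}^*,\dq)$ in positive degrees, only to that of the ambient $L^2$ complex; once that is sorted out, the rest is standard \v{C}ech--Dolbeault formalism.
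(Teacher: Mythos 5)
Your proposal is correct and follows essentially the same route as the paper: both rest on Theorem~\ref{thm:resolution0} to identify $H^{n,q}_{(2)}(M,[V]^{sing})$ with the \v{C}ech cohomology of $\mathcal{K}_M\otimes\OO(V)\otimes\mathcal{J}(V)$ on a Stein cover and then apply the \v{C}ech-to-Dolbeault construction, observing that it only involves holomorphic cochains, smooth cut-off functions and $\dq$, hence lands in $\Gamma\big(M,\mathcal{C}^\infty_{n,q}\otimes\OO(V)\otimes\mathcal{J}(V)\big)$. The paper simply writes the resulting De Rham--Weil--Dolbeault map in closed form, $\Lambda_q c=\sum c_{\nu_0\cdots\nu_q}\chi_{\nu_q}\dq\chi_{\nu_0}\wedge\cdots\wedge\dq\chi_{\nu_{q-1}}$, which is what your iterative descent computes.
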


\begin{proof}
We will use the DeRham-Weil-Dolbeault isomorphism (see e.g. Demailly, \cite[IV.6]{De4},
or adopt the procedure from \cite[Chapter~7.4]{Hoe3}). 
Let $\mathcal{U}=\{U_\alpha\}$ be a Stein cover for $M$ and $\{\chi_\alpha\}$
a smooth partition of unity subordinate to $\mathcal{U}$.
Recall that the DeRham-Weil-Dolbeault map on \v{C}ech cohomology
\begin{eqnarray}\label{eq:RWD1}
[\Lambda_q]: \check{H}^q\big(\mathcal{U},\mathcal{K}_M\otimes \OO(V) \otimes\mathcal{J}(V)\big)
\longrightarrow H^{n,q}_{(2)}(M,[V]^{sing})
\end{eqnarray}
is defined as follows: given a \v{C}ech cocycle $c\in C^q(\mathcal{U},\mathcal{K}_M\otimes \OO(V) \otimes\mathcal{J}(V))$,
set
\begin{eqnarray}\label{eq:RWD2}
\Lambda_q c :=\sum_{\nu_0, ..., \nu_q} c_{\nu_0\cdots\nu_q} \chi_{\nu_q} \dq \chi_{\nu_0}\wedge\cdots \wedge\dq\chi_{\nu_{q-1}}.
\end{eqnarray}
As $(\mathcal{C}^{n,*}_M([V]^{sing}),\dq_w)$ is a fine resolution for $\mathcal{K}_M\otimes\OO(V)\otimes\mathcal{J}(V)$
(see Theorem ~\ref{thm:resolution0} and \eqref{eq:iso101}), $[\Lambda_q]$ is an isomorphism.
So, each class $[\phi]\in H^{n,q}_{(2)}(M,[V]^{sing})$ has a representative
$$\phi=\Lambda_q c \in \Gamma\big(M,\mathcal{C}^\infty_{n,q}\otimes\OO(V)\otimes\mathcal{J}(V)\big).$$
\end{proof}

We are now ready to prove Theorem \ref{kortkort}, 
replacing the upper left cohomology group in \eqref{eq:diagram08} by $H^{n,q}_{(2)}(M,[V]^{sing})$.
Note that it does not cause any difficulty to include a Hermitian holomorphic line bundle $F\rightarrow M$
in the statement of Theorem ~\ref{thm:resolution0} and Lemma ~\ref{lem:representative}.

\begin{proof}[Proof of Theorem \ref{kortkort}] 
Starting from Proposition ~\ref{thm:adjunction1}, we replace the cohomology group
$$H^q\big(\Gamma(M,\mathcal{C}^\infty_{n,*}(F)\otimes\OO(V)\otimes\mathcal{J}(V))\big)$$
in the upper left corner of the commutative diagram \eqref{eq:diagram08} by the $L^2$-cohomology
$$H^{n,q}_{(2)}(M,F\otimes [V]^{sing}) = H^q\big(\Gamma(M,\mathcal{C}^{n,*}_M(F\otimes [V]^{sing}))\big).$$
We do that by adding the map
$$\Lambda_q\circ [\Lambda_q]^{-1}: H^{n,q}_{(2)}(M,F\otimes [V]^{sing}) 
\longrightarrow H^q\big(\Gamma(M,\mathcal{C}^\infty_{n,*}(F)\otimes\OO(V)\otimes\mathcal{J}(V))\big)$$
to the diagram \eqref{eq:diagram08}, 
where $\Lambda_q$ is the DeRham-Weil-Dolbeault map as defined in \eqref{eq:RWD1}, \eqref{eq:RWD2}.
The application of $\Lambda_q\circ [\Lambda_q]^{-1}$ means to choose smooth
representatives in $\Gamma(M,\mathcal{C}^\infty_{n,q}(F)\otimes\OO(V)\otimes\mathcal{J}(V))$
for cohomology classes in $H^{n,q}_{(2)}(M,F\otimes [V]^{sing})$.
Note that $\Lambda_q\circ[\Lambda_q]^{-1}$ does not depend on the choices made in Lemma ~\ref{lem:representative}
as we consider the map on cohomology classes.

By use of Proposition ~\ref{thm:adjunction1}, it only remains to show that
\begin{eqnarray*}
\pi^*\circ \Lambda_q\circ [\Lambda_q]^{-1}: H^{n,q}_{(2)}(M,F\otimes [V]^{sing}) \longrightarrow H^{n,q}(M',\pi^* F\otimes [V'])
\end{eqnarray*}
is an isomorphism. This is equivalent to showing that
\begin{eqnarray*}
\pi^*\circ \Lambda_q: \check{H}^q(\mathcal{U},\mathcal{K}_M(F)\otimes\OO(V)\otimes\mathcal{J}(V))
\rightarrow H^{n,q}(M',\pi^* F\otimes [V'])
\end{eqnarray*}
is an isomorphism, where $\mathcal{U}=\{U_\alpha\}$ is a Stein cover for $M$
and $\Lambda_q$ is the DeRham-Weil-Dolbeault map with respect to a suitable partition of unity $\{\chi_\alpha\}$
subordinate to $\mathcal{U}$.
But $\pi^* \circ \Lambda_q= \Lambda_q' \circ \pi^*$,
where we let $\Lambda_q'$ denote the DeRham-Weil-Dolbeault map with respect to the covering $\pi^*\mathcal{U}=\{\pi^{-1}(U_\alpha)\}$
and the partition of unity $\{\pi^* \chi_\alpha\}$ on $M'$.
Using \eqref{bluebird} we get 
\begin{eqnarray*}
\pi^*: \check{H}^q(\mathcal{U},\mathcal{K}_M(F)\otimes\OO(V)\otimes\mathcal{J}(V))
\overset{\cong}{\longrightarrow}
\check{H}^q(\pi^*\mathcal{U},\mathcal{K}_{M'}(\pi^* F) \otimes
\OO(V')) 
\end{eqnarray*} 
and thus by \eqref{dino}, 
$$H^q\big(\pi^{-1}(U),\mathcal{K}_{M'}(\pi^* F) \otimes \OO(V')\big)
= H^q\big(U,\mathcal{K}_M(F)\otimes\OO(V)\otimes\mathcal{J}(V)\big)$$
on open sets $U\subset M$. Hence, $\pi^* \mathcal{U}$ is a Leray cover for $\mathcal{K}_{M'}(\pi^* F) \otimes \OO(V')$
on $M'$, meaning that the DeRham-Weil-Dolbeault map $\Lambda_q'$ on $M'$ is also an isomorphism.
Hence, $\pi^*\circ\Lambda_q = \Lambda_q'\circ\pi^*$ is an isomorphism.
\end{proof}

The proof of Theorem ~\ref{kortkort} yields immediately:

\begin{cor}\label{cor:surjective}
The vertical map $\pi^*$ on the left-hand side of the commutative diagram \eqref{eq:diagram08}
in Proposition ~\ref{thm:adjunction1} is surjective.
\end{cor}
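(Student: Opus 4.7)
The plan is to read off the corollary as an immediate consequence of the factorization that was established in the proof of Theorem~\ref{kortkort}. Recall that in that proof the map
\begin{eqnarray*}
\pi^*\circ \Lambda_q\circ [\Lambda_q]^{-1}: H^{n,q}_{(2)}(M,F\otimes [V]^{sing}) \longrightarrow H^{n,q}(M',\pi^* F\otimes [V'])
\end{eqnarray*}
was shown to be an isomorphism. Here $\Lambda_q\circ [\Lambda_q]^{-1}$ assigns to an $L^2$-cohomology class a smooth representative in $\Gamma(M,\mathcal{C}^\infty_{n,q}(F)\otimes\OO(V)\otimes\mathcal{J}(V))$ via Lemma~\ref{lem:representative}, so its image actually lies in the upper left corner $H^q\big(\Gamma(M,\mathcal{C}^\infty_{n,*}(F)\otimes\OO(V)\otimes\mathcal{J}(V))\big)$ of the diagram \eqref{eq:diagram08}.

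The key observation is that the above composite factors through the vertical map $\pi^*$ whose surjectivity we wish to prove: indeed, by the very definition of $\Lambda_q\circ[\Lambda_q]^{-1}$, applying the left vertical $\pi^*$ of \eqref{eq:diagram08} to $\Lambda_q\circ[\Lambda_q]^{-1}([\phi])$ produces the same class as $\pi^*\circ\Lambda_q\circ[\Lambda_q]^{-1}([\phi])$ in $H^{n,q}(M',\pi^*F\otimes[V'])$. Thus the left vertical $\pi^*$ sits as the second factor in a surjection, and a surjective composition forces the outer map to be surjective.

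Concretely, given any class $[\alpha']\in H^{n,q}(M',\pi^*F\otimes [V'])$, the isomorphism of Theorem~\ref{kortkort} supplies $[\phi]\in H^{n,q}_{(2)}(M,F\otimes[V]^{sing})$ with $\pi^*\circ\Lambda_q\circ[\Lambda_q]^{-1}([\phi])=[\alpha']$. Setting $[\omega]:=\Lambda_q\circ[\Lambda_q]^{-1}([\phi])$ yields an element of the upper left corner of \eqref{eq:diagram08} whose image under the left vertical $\pi^*$ is precisely $[\alpha']$. There is no real obstacle here; the only thing worth double-checking is the identification of the two possible meanings of $\pi^*$ (the one in the factorization versus the one appearing as the left vertical arrow in \eqref{eq:diagram08}), but both are induced by honest pull-back of smooth representatives of $(n,q)$-forms with values in the adjunction sheaf, so they coincide on cohomology. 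This completes the proposal.
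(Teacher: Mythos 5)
Your proposal is correct and is essentially the paper's own argument: the authors state that Corollary~\ref{cor:surjective} follows immediately from the proof of Theorem~\ref{kortkort}, precisely because the isomorphism $\pi^*\circ\Lambda_q\circ[\Lambda_q]^{-1}$ factors through the left vertical arrow of \eqref{eq:diagram08} via the smooth-representative map $\Lambda_q\circ[\Lambda_q]^{-1}$, so surjectivity of the composite forces surjectivity of the outer map. Your additional check that the two occurrences of $\pi^*$ agree on cohomology is a sensible (and correct) point of care, but nothing further is needed.
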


\subsection{Extension of $L^2$-cohomology classes}\label{ext_section}






\begin{proof}[Proof of Theorem \ref{thm:extension00}] 
By Theorem ~\ref{kortkort}, we can consider instead the extension problem for the smooth hypersurface $V'$ in $M'$. 
So, we have to discuss the question whether
\begin{eqnarray*}
\Psi_{V'}: H^{n,q}(M',\pi^* F \otimes [V']) \longrightarrow H^{n-1,q}_{(2)} (V', \pi^* F)
\end{eqnarray*}
is surjective, where $\pi^* F$ carries the smooth Hermitian metric $\pi^* e^{-2\phi}$ and
$[V']$ carries the smooth Hermitian metric $e^{-2\psi'}$
where $\psi'=\pi^* \psi - \varphi_\Delta$ with $\Delta=\pi^* V -V'$.
Recall that locally $\varphi_\Delta=\log |f'_\alpha|$ (see the proof of Theorem \ref{thm:smooth}),
so that
\begin{eqnarray}\label{eq:pos2b}
i\partial\dq \psi' = i\partial\dq \pi^*\psi - i\partial\dq \log|f'_\alpha| \leq i\partial \dq \pi^*\psi.
\end{eqnarray}
As $\pi: M' \rightarrow M$ is holomorphic, \eqref{eq:pos1}, \eqref{eq:pos2} and \eqref{eq:pos2b} give
\begin{eqnarray}\label{eq:pos3}
\big(i\partial\dq \pi^* \phi - \epsilon i\partial\dq \psi' \big) \wedge (\pi^* \omega)^q \geq
\big(i\partial\dq \pi^* \phi - \epsilon i\partial\dq \pi^* \psi\big) \wedge (\pi^* \omega)^q &\geq& 0\ ,\\
i\partial\dq \pi^* \phi \wedge (\pi^*\omega)^q &\geq& 0.\label{eq:pos4}
\end{eqnarray}

We may assume that the embedded resolution of $V$ in $M$
is obtained by finitely many blow-ups (i.e. monoidal transformations) along smooth centers, see \cite[Theorem~13.4]{BiMi}.
So, $M'$ can be interpreted as a submanifold in a finite product of K\"ahler manifolds
and it inherits a K\"ahler metric $\omega'$.

As $\omega'$ is strictly positive definite and $\pi^* \omega$ is only positive semi-definite,
there exists a constant $C>0$ such that $\omega' \geq C\pi^* \omega$.
Thus, \eqref{eq:pos3} and \eqref{eq:pos4} imply
\begin{eqnarray*}
\big(i\partial\dq \pi^* \phi - \epsilon i\partial\dq \psi' \big) \wedge (\omega')^q &\geq& 
C^q \big(i\partial\dq \pi^* \phi - \epsilon i\partial\dq \psi' \big) \wedge (\pi^* \omega)^q \geq  0\ ,\\
i\partial\dq \pi^* \phi \wedge (\omega')^q &\geq& C^q i\partial\dq \pi^* \phi \wedge (\pi^* \omega)^q \geq 0.
\end{eqnarray*}
So, the smooth metrics $\pi^* e^{-2\phi}$ and $e^{-2\psi'}$
satisfy the assumptions of Berndtsson's extension Theorem 3.1 in
\cite{B} 
for the smooth divisor $V'$ in the K\"ahler manifold $(M',\omega')$,
and this gives the required surjectivity of $\Psi_{V'}$.
\end{proof}


Combining Theorem ~\ref{thm:extension00} with Lemma ~\ref{lem:representative}, we obtain also Corollary~\ref{cor:extension00}.


%

\section{Examples for the multiplier ideal sheaf $\mathcal{J}(V)$}\label{sec:examples}

We shall illustrate the role of the multiplier ideal sheaf $\mathcal{J}(V)$
and of our adjunction sheaf $\OO(V)\otimes\mathcal{J}(V)$, respectively, in three simple examples.

\begin{example} 
Let us discuss briefly what would happen if we blew up a regular hypersurface.
So, let $V$ be the regular hypersurface in $\C^2$ (with coordinates $z_1, z_2$)
given as the zero set of $f(z)=z_1$.
Let $\pi: M' \rightarrow \C^2$ be the blow up of the origin, i.e. $M'$ is given by the equation
$z_1w_2=z_2w_1$ in $\C^2\times \C\mathbb{P}^1$ with coordinates
$((z_1, z_2); [w_1:w_2])$ and $\pi$ is the projection $\C^2\times
\C\mathbb{P}^1\to \C^2, (z,w)\mapsto z$.

We cover $M'$ by two charts.
The first is given by $w_1=1$ (coordinates $z_1, w_2$).
Here, $\pi^*f = z_1$ and the exceptional divisor $E$ appears as $\{z_1=0\}$.

The second chart is given by $w_2=1$ (coordinates $w_1, z_2$). Here, $\pi^* f=z_2 w_1$,
the exceptional divisor $E$ appears as $\{z_2=0\}$, and the strict transform $V'$ of $V$
is just $\{w_1=0\}$.

Thus, in the notation of Section ~\ref{ssec:adjunction2}, we have $E_f=E$ so that $E_f$
is generated by the two holomorphic functions $\pi^*g_1$ and $\pi^* g_2$ where $g_1=z_1$ and $g_2=z_2$.
So, $\mathcal{J}(V)=\mathcal{J}(\varphi)$ with $\varphi=\log\big(|z_1| + |z_2|\big)$.
Let $h\in (\OO_{\C^2})_0$ be a germ of a holomorphic function at the origin of $\C^2$.
Then $he^{-\varphi}=h/(|z_1|+|z_2|)$ is locally square-integrable at the origin.
We conclude that $\mathcal{J}(V)=\mathcal{J}(\varphi)=\OO_{\C^2}$,
which is expected since $V$ is smooth. 

\end{example}


\begin{example} 
Let $V$ be the cusp in $\C^2$ (with coordinates $z_1,z_2$) given as the zero set of $f(z)=z_1^3-z_2^2$.
We obtain an embedded resolution $\pi: (V',M') \rightarrow (V,\C^2)$ with only normal crossings by
a sequence of three blow-ups. $M'$ can be realized as follows. We consider $\C^2\times \C\mathbb{P}^1\times \C\mathbb{P}^1\times \C\mathbb{P}^1$
with coordinates $\big((z_1,z_2); [w_1:w_2]; [x_1:x_2]; [y_1:y_2] \big)$ and define $M'$ by the three
equations
\begin{equation*}
z_1w_2  =  z_2w_1, ~~~~ 
z_1x_2  =  w_2 x_1, ~~~~ 
x_1y_2  =  w_2 y_1.
\end{equation*}
The resolution $\pi$ is given by the projection on the first factor $\C^2$.
The exceptional set consists of three copies $E_1, E_2, E_3$ of $\C\mathbb{P}^1$ coming from the three blow-ups.
It is not hard to check that $(\pi^*f) = V' + 2E_1 + 3E_2 + 6E_3$. 
The whole resolution $M'$ can be covered by eight charts,
but we can get a good picture by just considering two of them.

The first is given by $w_1=x_2=y_1=1$ (coordinates $x_1, y_2$). Then $z_1=x_1^2y_2$ and $z_2=x_1^3y_2^2$
so that 
$\pi^* f= x_1^6 y_2^3 (1-y_2).$ 
Here, $E_2=(y_2)$, $E_3=(x_1)$ and $V'=\{1-y_2=0\}$.

The second interesting chart is given by $w_1=x_2=y_2=1$ (coordinates $y_1, w_2$).
Then $z_1=y_1w_2^2$ and $z_2=y_1 w_2^3$ so that 
$\pi^* f= w_2^6 y_1^2(y_1-1).$ 
Here, $E_1=(y_1)$, $E_3=(w_2)$ and $V'=\{y_1-1=0\}$.

One can check that 
$E_f=2E_1 + 3E_2 + 6E_3$ 
is generated by the two holomorphic functions $\pi^* g_1$ and $\pi^* g_2$ where $g_1=z_1^3$ and $g_2=z_2^2$.
So, we obtain here that $\mathcal{J}(V)=\mathcal{J}(\varphi)$ with $\varphi=\log\big(|z_1|^3+|z_2|^2\big)$.
It is a standard exercise to compute the multiplier ideal sheaf $\mathcal{J}(\varphi)$ (see e.g. \cite{De2}, Exercise 15.7):
$$\mathcal{J}(V)=\mathcal{J}(\varphi) = (z_1,z_2),$$
i.e. we obtain the ideal sheaf of the origin (with multiplicity one).

It is now interesting to check that this makes sense in view of the adjunction mapping
$$\Psi: \mathcal{K}_{\C^2} \otimes \OO(V) \otimes \mathcal{J}(V) \rightarrow \mathcal{K}_V.$$
A germ $\omega$ of $\mathcal{K}_{\C^2} \otimes \OO(V) \otimes \mathcal{J}(V)$ can be written as
$$\omega = g \frac{dz_1\wedge dz_2}{f} = g \frac{dz_1\wedge dz_2}{z_1^3-z_2^2},$$
where $g$ is a germ of a holomorphic function in $\mathcal{J}(V)$.
As a cusp, $V$ has a well-defined tangential space at the origin, that is $T_0V=\{z_2=0\}$.
Thus, $|dz_1|_V\sim 1$ in a neighborhood of the origin on $V$ (measured in the metric on $V$ induced by the Euclidean metric of $\C^2$).
Recall that
$$\Psi(\omega)= - g \frac{dz_1}{\partial f/\partial z_2} = g \frac{dz_1}{2z_2}.$$
Recall that by definition $\Psi (\omega)$ is in the
Grauert-Riemenschneider canonical sheaf $\mathcal{K}_V$ if and only if
it is square-integrable on $V$. But
$$|\Psi(\omega)|_V = \left| g \frac{dz_1}{2 z_2}\right|_V \sim |g| \frac{1}{|z_1|^{3/2}},$$
which would not be square-integrable on $V$ at the origin if $g$ were just a holomorphic function.
But $g$ is of the form $g=z_1 h_1$ or $g=z_2 h_2$ so that $\Psi(\omega)$ is in fact square-integrable on $V$.
This illustrates the role of the multiplier ideal sheaf $\mathcal{J}(V)$ in the adjunction formula.\\
\end{example}

\begin{example} 
Consider the hypersurface  $V$ generated by $f(x,y,z)=z^2-xy$ in $\C^3$ (with coordinates $x,y,z$).
The embedded resolution $\pi: (V',M') \rightarrow (V,M)$ is obtained by a single blow-up of the origin.
The exceptional set $E$ is a single copy of $\C\mathbb{P}^2$ and it easy to check that $E_f=2E$
as $f$ vanishes to order $2$ in the origin.

One can check that $E_f=2E$ is generated by the three holomorphic functions $\pi^* g_1$, $\pi^* g_2$ and $\pi^* g_3$
where $g_1=x^2$, $g_2=y^2$ and $g_3=z^2$. It follows that $\mathcal{J}(V)=\mathcal{J}(\varphi)$
with 
$\varphi=\log\big( |x|^2 + |y|^2 + |z|^2\big).$ 
But if $h$ is a holomorphic function, then $he^{-\varphi}= h/(|x|^2+|y|^2+|z|^2)$ is square-integrable in $\C^3$.
Thus, 
$\mathcal{J}(V)=\mathcal{J}(\varphi)=\OO_{\C^3},$ 
i.e. $V$ has a canonical singularity (see Theorem ~\ref{thm:gro2}).

We shall check that the adjunction map
$$\Psi: \mathcal{K}_{\C^3} \otimes\OO(V) \otimes \mathcal{J}(V) = \mathcal{K}_{\C^3}\otimes \OO(V) \longrightarrow \mathcal{K}_V$$
makes sense. A section $\omega$ of $\mathcal{K}_{\C^3}\otimes\OO(V)$ can be written as
$$\omega= g \frac{dx\wedge dy \wedge dz}{z^2-xy},$$
where $g$ is just a holomorphic function. To check that $\Psi(\omega)$ is square-integrable on $V$, we cover the variety $V$
with two charts. For that purpose we cover $V$ by the two parts where either $|x|\geq |y|$ or $|y|\geq |x|$, respectively.

Let $|x|\geq |y|$. Note that on $V$, this is equivalent to $|z|\leq |x|$. So, we can represent $V$ as a graph with bounded 
gradient over $\C^2$ with coordinates $x,z$ under the map $y=G(x,z)=z^2/x$.
For the gradient, we get 
$\nabla G = (- z^2/x^2, 2z/x),$ 
which is in fact bounded as $|z|\leq |x|$. In the coordinates $x$, $z$ we have
$$\Psi(\omega) = - g \frac{dx \wedge dz}{\partial f/\partial y} = g \frac{dx\wedge dz}{x}$$
so that
$$|\Psi(\omega)|_V \lesssim \frac{1}{|x|}.$$
But this is in fact square-integrable in $\C^2$ with coordinates $x$, $z$ over the region $|z|\leq |x|$ where we have to integrate
(the pole of $1/x$ is only met in $0\in\C^2$).

The second chart is completely analogous by symmetry.
Let $|y|\geq |x|$. On $V$, this is equivalent to $|xy|\leq |y|^2$ or $|z|\leq |y|$. So, we can represent $V$ as a graph with bounded 
slope over $\C^2$ with coordinates $y,z$ under the map $x=G(y,z)=z^2/y$.
In the coordinates $y$, $z$ we have
$$\Psi(\omega) = g \frac{dy\wedge dz}{\partial f/\partial x} = - g \frac{dy\wedge dz}{y}$$
so that
$$|\Psi(\omega)|_V \sim \frac{1}{|y|}$$
is square-integrable over the region $|z|\leq |y|$ in $\C^2$.

That shows that $\Psi(\omega)$ is in fact square-integrable over $V$, thus a section in $\mathcal{K}_V$.

\end{example}

\end{document}